\providecommand{\tabularnewline}{\\}
\def\RSthmtxt{theorem~}\newref{thm}{name = \RSthmtxt}}
\def\RSlemtxt{lemma~}\newref{lem}{name = \RSlemtxt}}
\theoremstyle{plain}
\newtheorem{thm}{\protect\theoremname}
\theoremstyle{remark}
\newtheorem{rem}[thm]{\protect\remarkname}
\theoremstyle{definition}
\newtheorem{defn}[thm]{\protect\definitionname}
\theoremstyle{plain}
\newtheorem{lem}[thm]{\protect\lemmaname}
\theoremstyle{plain}
\newtheorem{claim}[thm]{\protect\claimname}
\theoremstyle{plain}
\newtheorem{cor}[thm]{\protect\corollaryname}
\theoremstyle{definition}
\newtheorem{example}[thm]{\protect\examplename}
\theoremstyle{definition}
\newtheorem{problem}[thm]{\protect\problemname}
\newcommand{\xyR}[1]{
  \xydef@\xymatrixrowsep@{#1}}
\newcommand{\xyC}[1]{
  \xydef@\xymatrixcolsep@{#1}}
\providecommand{\claimname}{Claim}
\providecommand{\corollaryname}{Corollary}
\providecommand{\definitionname}{Definition}
\providecommand{\examplename}{Example}
\providecommand{\lemmaname}{Lemma}
\providecommand{\problemname}{Problem}
\providecommand{\remarkname}{Remark}
\providecommand{\theoremname}{Theorem}
\begin{document}
\global\long\def\call{\mathcal{L}}%
\global\long\def\nn{\mathcal{N}}%
\global\long\def\ff{\mathcal{F}}%
\global\long\def\aa{\mathcal{A}}%
\global\long\def\RR{\mathbb{R}}%
\global\long\def\EE{\mathbb{E}}%
\global\long\def\CC{\mathbb{C}}%
\global\long\def\QQ{\mathbb{Q}}%
\global\long\def\ZZ{\mathbb{Z}}%
\global\long\def\NN{\mathbb{N}}%
\global\long\def\KK{\mathbb{K}}%
\global\long\def\SL{\mathrm{SL}}%
\global\long\def\GL{\mathrm{GL}}%
\global\long\def\ds{\mathrm{ds}}%
\global\long\def\dnu{\mathrm{d\nu}}%
\global\long\def\dmu{\mathrm{d\mu}}%
\global\long\def\dt{\mathrm{dt}}%
\global\long\def\dw{\mathrm{dw}}%
\global\long\def\dx{\mathrm{dx}}%
\global\long\def\dy{\mathrm{dy}}%
\global\long\def\norm#1{\left\Vert #1\right\Vert }%
\global\long\def\limfi#1{{\displaystyle \lim_{#1\to\infty}}}%
\global\long\def\arrfi#1{\overset{#1\to\infty}{\longrightarrow}}%
\global\long\def\flr#1{\left\lfloor #1\right\rfloor }%
\global\long\def\lcm{\mathrm{lcm}}%

\title{On Euler polynomial continued fractions}
\author{Ofir David\\
The Ramanujan Machine team, Faculty of Electrical and Computer Engineering,\\
Technion - Israel Institute of Technology, Haifa 3200003, Israel}
\maketitle
\begin{abstract}
In this paper, we introduce the polynomial continued fractions, a
close relative of the well-known simple continued fraction expansions
which are widely used in number theory and in general. While they
may not possess all the intriguing properties of simple continued
fractions, polynomial continued fractions have many interesting patterns
which can be exploited. Specifically, we explore the Euler continued
fractions within this framework and present an algorithm for their
identification.
\end{abstract}

\section{Introduction}

The simple continued fraction expansion is a central and important
mathematical tool used in many areas in mathematics and outside it.
There are many interesting properties that can be extracted from these
expansions, and in particular properties which show how ``close''
a number is to being rational. For example, some interesting expansions
for the golden ratio $\varphi$, $\sqrt{2}$ and $e$ are
\begin{align*}
\varphi & :=\frac{1+\sqrt{5}}{2}=1+\frac{1}{1+\frac{1}{1+\frac{1}{1+\frac{1}{1+\frac{1}{1+\begin{aligned}\ddots\end{aligned}
}}}}},\quad\sqrt{2}=1+\frac{1}{2+\frac{1}{2+\frac{1}{2+\frac{1}{2+\frac{1}{2+\begin{aligned}\ddots\end{aligned}
}}}}},\quad e=2+\frac{1}{1+\frac{1}{2+\frac{1}{1+\frac{1}{1+\frac{1}{4+\frac{1}{1+\begin{aligned}\ddots\end{aligned}
}}}}}}.
\end{align*}
This type of expansions have the extra property that they are very
easy to write. For example, in $\varphi$ and $\sqrt{2}-1$ the coefficients
are constant (1 and 2 respectively), and generally in any algebraic
number of degree 2 the coefficients will be eventually periodic (and
we go over some of these details in \subsecref{Constant-continued-fraction}).
On the other hand $e-2$ is not algebraic, but still the coefficients
are in essence periodic: they are 3-periodic where the coefficients
in the $k$-th period are $\left(1,2k,1\right)$.

However, in general this is far from true, even for well known and
useful constants, e.g. for $\pi$ we have
\[
\pi=3+\frac{1}{7+\frac{1}{15+\frac{1}{1+\frac{1}{292+\frac{1}{1+\frac{1}{1+\frac{1}{1+\frac{1}{2+\begin{aligned}\ddots\end{aligned}
}}}}}}}}.
\]
The coefficients in the expansion don't seem to have any simple pattern
that we can recognize, thus making it harder to study $\pi$ through
this expansion.

\medskip{}

The goal of this paper is to study a close relative of the simple
continued fractions called the \textbf{polynomial continued fraction}.
In this new expansion, we don't restrict the numerators to be $1$'s
as in the simple continued fraction version, however we do require
both the numerators and denominators to be polynomials. For example,
$\pi$ has this much simpler polynomial continued fraction expansion:

\[
\pi+3=6+\frac{1^{2}}{6+\frac{2^{2}}{6+\frac{3^{2}}{6+\frac{4^{2}}{6+\begin{aligned}\ddots\end{aligned}
}}}}.
\]

This new type of expansion loses some of the advantages of the simple
continued fraction expansion, and their direct relations to irrationality,
but in return we gain many more expansions, which can be much easier
to work with, thus may lead to many interesting new results. In particular,
the polynomial continued fractions are closely related to recurrence
relations with polynomial coefficients. This type of polynomial recurrence
was one of the key ingredients used by Ap\'ery to show the irrationality
of $\zeta\left(3\right)$ \cite{apery_irrationalite_1979,van_der_poorten_proof_1979}.

\medskip{}

One of the simplest way to construct polynomial continued fractions
was given by Euler, which found a way to convert many interesting
infinite sums presentations into polynomial continued fractions (see
\subsecref{Euler's-formula} for details). Our main result in this
paper is describing the other direction, namely finding simple conditions
for when a polynomial continued fraction can be converted back into
an infinite sum. We call this family the \textbf{Euler continued fractions}.
Using the notation
\[
\KK_{1}^{\infty}\frac{b_{i}}{a_{i}}:=\frac{b_{1}}{a_{1}+\cfrac{b_{2}}{a_{2}+\cfrac{b_{3}}{a_{3}+\ddots}}},
\]
we have the following:
\begin{thm}[\textbf{\textit{Euler continued fractions}}]
\label{thm:recurrence-roots}Let $h_{1},h_{2},f:\CC\to\CC$ be any
functions, and define $a,b:\CC\to\CC$ such that 
\begin{align*}
b\left(x\right) & =-h_{1}\left(x\right)h_{2}\left(x\right)\\
f\left(x\right)a\left(x\right) & =f\left(x-1\right)h_{1}\left(x\right)+f\left(x+1\right)h_{2}\left(x+1\right).
\end{align*}
Then
\[
\KK_{1}^{n}\frac{b\left(i\right)}{a\left(i\right)}=\frac{f\left(1\right)h_{2}\left(1\right)}{f\left(0\right)}\left(\frac{1}{\sum_{k=0}^{n}\frac{f\left(0\right)f\left(1\right)}{f\left(k\right)f\left(k+1\right)}\prod_{i=1}^{k}\left(\frac{h_{1}\left(i\right)}{h_{2}\left(i+1\right)}\right)}-1\right).
\]
\end{thm}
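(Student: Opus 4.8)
The plan is to run the classical machinery of convergents: the numerators and denominators of the partial continued fractions $\KK_1^n\frac{b(i)}{a(i)}$ satisfy a three-term recurrence, and the point is that the functional equation imposed on $a$ is exactly what is needed to reduce that second-order recurrence to a first-order one. I will write $\KK_1^n\frac{b(i)}{a(i)}=p_n/q_n$ with the standard normalization $p_{-1}=1,\ p_0=0,\ q_{-1}=0,\ q_0=1$, where both $y=p$ and $y=q$ solve $y_n=a(n)y_{n-1}+b(n)y_{n-2}$ for $n\ge 1$ (I use $p,q$ to avoid clashing with $h_1,h_2$). Substituting $a(n)=\big(f(n-1)h_1(n)+f(n+1)h_2(n+1)\big)/f(n)$ and $b(n)=-h_1(n)h_2(n)$, multiplying through by $f(n)$, and regrouping, I expect to obtain
\[
f(n)y_n-f(n+1)h_2(n+1)y_{n-1}=h_1(n)\big(f(n-1)y_{n-1}-f(n)h_2(n)y_{n-2}\big).
\]
Thus the auxiliary sequence $z_n:=f(n)y_n-f(n+1)h_2(n+1)y_{n-1}$ satisfies $z_n=h_1(n)z_{n-1}$, hence $z_n=z_0\prod_{i=1}^n h_1(i)$. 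Evaluating $z_0$ against the two sets of initial data yields the two key identities $f(n)p_n-f(n+1)h_2(n+1)p_{n-1}=-f(1)h_2(1)\prod_{i=1}^n h_1(i)$ and $f(n)q_n-f(n+1)h_2(n+1)q_{n-1}=f(0)\prod_{i=1}^n h_1(i)$.

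Next I would use these two identities twice. First, the linear combination $f(0)\cdot(\text{$p$-identity})+f(1)h_2(1)\cdot(\text{$q$-identity})$ annihilates both right-hand sides, so $w_n:=f(0)p_n+f(1)h_2(1)q_n$ obeys $f(n)w_n=f(n+1)h_2(n+1)w_{n-1}$; combined with $w_0=f(1)h_2(1)$ this telescopes to $w_n=f(n+1)\prod_{j=1}^{n+1}h_2(j)$. Second, I would solve the $q$-identity on its own: dividing it by $f(n)\phi(n)$, where $\phi(n):=f(n+1)\prod_{i=1}^n h_2(i+1)$ is a solution of the associated homogeneous recurrence, turns the left side into the exact difference $q_n/\phi(n)-q_{n-1}/\phi(n-1)$. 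Summing from $1$ to $n$ and absorbing $q_0/\phi(0)=1/f(1)$ as the $k=0$ term gives $q_n/\phi(n)=\tfrac1{f(1)}\sum_{k=0}^n\frac{f(0)f(1)}{f(k)f(k+1)}\prod_{i=1}^k\frac{h_1(i)}{h_2(i+1)}$; write this as $q_n=\phi(n)S_n/f(1)$ with $S_n$ the sum appearing in the statement.

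Finally, I would recover $p_n$ from $w_n=f(0)p_n+f(1)h_2(1)q_n$, substitute the closed forms for $w_n$ and $q_n$, and use $h_2(1)\prod_{i=1}^n h_2(i+1)=\prod_{j=1}^{n+1}h_2(j)$ to collapse the products, obtaining $p_n=h_2(1)f(n+1)\prod_{i=1}^n h_2(i+1)\,(1-S_n)/f(0)$. Dividing $p_n$ by $q_n$ then cancels every $f(n+1)$ and every $h_2$ factor and leaves $\frac{f(1)h_2(1)}{f(0)}\big(\frac1{S_n}-1\big)$, which is the assertion; as a sanity check, at $n=0$ one has $S_0=1$ and both sides vanish. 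I would add the standing remark that the identity is to be understood formally, or under the tacit hypothesis that $f$, $h_2$, and the partial sums $S_n$ do not vanish at the relevant integers, so that the divisions above — and the very equality $\KK_1^n\frac{b(i)}{a(i)}=p_n/q_n$ — are legitimate. The only genuine difficulty is spotting the linearizing substitution $z_n=f(n)y_n-f(n+1)h_2(n+1)y_{n-1}$ and the matching summation factor $\phi$; once these are in hand the rest is careful bookkeeping with index ranges and products.
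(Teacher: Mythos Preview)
Your argument is correct. The linearizing substitution $z_n=f(n)y_n-f(n+1)h_2(n+1)y_{n-1}$ does collapse the second-order recurrence to a first-order one, and the rest of the bookkeeping checks out line by line, including the initial conditions and the telescoping.

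That said, your route differs genuinely from the paper's. The paper does not touch the convergent recurrence directly. Instead it (i) recalls Euler's classical identity $\sum_{k=0}^n\prod_{i=1}^k r_i=\big(1+\KK_1^n\frac{-r_i}{1+r_i}\big)^{-1}$, (ii) proves an equivalence transformation $\KK_1^n\frac{b_i}{a_i}=\frac{1}{c_0}\KK_1^n\frac{c_{i-1}c_ib_i}{c_ia_i}$, and (iii) combines them into a corollary saying that if $c_ia_i+c_{i-1}c_ib_i=1$ then the continued fraction equals $\frac{1}{c_0}\big(\big(\sum(-1)^k\prod c_{i-1}c_ib_i\big)^{-1}-1\big)$. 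The theorem is then a two-line application: one verifies that $F_n=f(n)\prod_{1}^n h_2(k)$ satisfies $F_{n-1}b(n)+F_na(n)=F_{n+1}$, sets $c_n=F_n/F_{n+1}$, and plugs in. So the paper's proof is a conceptual reduction to Euler's sum-to-continued-fraction formula, whereas yours is a self-contained manipulation of the three-term recurrence that never invokes Euler's identity. Your approach buys explicit closed forms for $p_n$ and $q_n$ separately (not just their ratio), which is extra information the paper's short proof does not record; the paper's approach buys a cleaner narrative that situates the result inside the Euler-conversion framework it is building. Incidentally, your sequence $w_n=f(0)p_n+f(1)h_2(1)q_n$ is exactly the paper's $F_{n+1}$, so the two arguments meet at that point even though they arrive there differently.
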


\begin{rem}
Special cases of these continued fractions appear in several places
(in particular, see for example \cite{brier_note_2022,bowman_polynomial_2018,cohen_elementary_2022}),
though we did not see this exact formulation in the literature. 
\end{rem}

In general, given polynomials $b\left(x\right),a\left(x\right)$,
it is not obvious if and how to find polynomial $h_{1}\left(x\right),h_{2}\left(x\right),f\left(x\right)$
which satisfy the conditions in \thmref{recurrence-roots}. Assuming
we know how to find all decompositions of $b\left(x\right)$, this
problem is easy to solve in the ``trivial'' case when $f\equiv1$.
More generally, if we know the degree $d_{f}=\deg\left(f\right)$,
then finding if there is a solution to 
\[
f\left(x\right)a\left(x\right)=f\left(x-1\right)h_{1}\left(x\right)+f\left(x+1\right)h_{2}\left(x+1\right)
\]
given $a\left(x\right),h_{1}\left(x\right),h_{2}\left(x\right)$ is
just a system of linear equations which is easy to solve. Our second
result is an algorithm which finds all the possible degree $d_{f}$
thus allowing to give a full algorithm checking whether a polynomial
continued fraction is in the Euler family (given that we know how
to decompose $b\left(x\right)$). We describe this algorithm in \appref{Identifying-polynomial-continued},
and in particular in \thmref{f_algorithm}, and we also provide a
python program for this algorithm in \cite{ramanujan_machine_research_group_ramanujan_2023}.

Finally, we describe a more general framework for the polynomial continued
fractions in \secref{The-most-general}, by moving to general polynomial
matrices. This led us eventually to a new structure, which we call
the\textbf{ conservative matrix field}, used to combine many polynomial
continued fractions together in order to study their limits (e.g.
if they are irrational or not). This conservative matrix field can
be used to reprove and further understand Ap\'ery's original proof
of irrationality of $\zeta\left(3\right)$, and we investigate this
structure in a later paper.

\newpage{}

\section{\label{sec:Generalized-continued-fractions}The polynomial continued
fraction}

\subsection{\label{subsec:The-definitions}Definitions}

We start with a generalization of the simple continued fractions,
which unsurprisingly, is called generalized continued fractions. These
can be defined over any topological field, though here for simplicity
we focus on the complex field with its standard Euclidean metric,
and more specifically when the numerators and denominators are integers.
\begin{defn}[\textbf{(Generalized) continued fractions}]
Let $a_{n},b_{n}$ be a sequence of complex numbers. We will write
\[
a_{0}+\KK_{1}^{n}\frac{b_{i}}{a_{i}}:=a_{0}+\cfrac{b_{1}}{a_{1}+\cfrac{b_{2}}{a_{2}+\cfrac{b_{3}}{\ddots+\cfrac{b_{n}}{a_{n}+0}}}}\in\CC\cup\left\{ \infty\right\} ,
\]
and if the limit as $n\to\infty$ exists, we also will write
\[
a_{0}+\KK_{1}^{\infty}\frac{b_{i}}{a_{i}}:=\limfi n\left(a_{0}+\KK_{1}^{n}\frac{b_{i}}{a_{i}}\right).
\]

We call this type of expansion (both finite and infinite) a \textbf{continued
fraction expansion. }In case that $a_{0}+\KK_{1}^{\infty}\frac{b_{i}}{a_{i}}$
exists, we call the finite part $a_{0}+\KK_{1}^{n-1}\frac{b_{i}}{a_{i}}$
the \textbf{$n$-th convergents} for that expansion.

A continued fraction is called \textbf{simple continued fraction},
if $b_{i}=1$ for all $i$, $a_{0}\in\ZZ$ and $1\leq a_{i}\in\ZZ$
are positive integers for $i\geq1$.

A continued fraction is called \textbf{polynomial continued fraction},
if $b_{i}=b\left(i\right),\;a_{i}=a\left(i\right)$ for some polynomials
$a\left(x\right),b\left(x\right)\in\CC\left[x\right]$ and all $i\geq1$.
\end{defn}

\begin{rem}
In these notes, when talking about \textbf{simple continued fractions}
we will always add the ``\textbf{simple}'' adjective, unlike in
most of the literature, where they are only called \textbf{continued
fractions}.
\end{rem}

\medskip{}

Simple continued fraction expansion is one of the main and basic tools
used in number theory when studying rational approximations of numbers
(for more details, see chapter 3 in \cite{einsiedler_ergodic_2013}).
The coefficients $a_{i}$ in that expansion can be found using a generalized
Euclidean division algorithm, and it is well known that a number is
rational if and only if its simple continued fraction expansion is
finite. However, while we have an algorithm to find the (almost) unique
expansion, in general they can be very complicated without any known
patterns, even for ``nice'' numbers, for example:

\[
\pi=[3;7,15,1,292,1,1,1,2,1,3,1,14,2,1,1,2,2,2,2,1,84,...]=3+\cfrac{1}{7+\cfrac{1}{15+\cfrac{1}{1+\ddots}}}.
\]

When moving to generalized continued fractions, even when we assume
that both $a_{i}$ and $b_{i}$ are integers, we lose the uniqueness
property, and the rational if and only if finite property. What we
gain in return are more presentations for each number, where some
of them can be much simpler to use. For example, $\pi$ can be written
as 
\[
\pi=3+\KK_{1}^{\infty}\frac{\left(2n-1\right)^{2}}{6}.
\]
We want to study these presentations, and (hopefully) use them to
show interesting properties, e.g. prove irrationality for certain
numbers.

\medskip{}

\subsection{The Mobius action and recurrence relations}

One of the main tools used to study continued fractions are \textbf{Mobius
transformations}.
\begin{defn}[Mobius Action]
Given a $2\times2$ invertible matrix $M=\left(\begin{smallmatrix}a & b\\
c & d
\end{smallmatrix}\right)\in\mathrm{GL_{2}\left(\CC\right)}$ and a $z\in\CC\cup\left\{ \infty\right\} $, the \textbf{Mobius action}
is defined by
\[
M\left(z\right):=\begin{cases}
\frac{az+b}{cz+d} & z\neq-\frac{d}{c},\infty\\
\infty & z=-\frac{d}{c}\\
\frac{a}{c} & z=\infty
\end{cases}.
\]

In other words, we apply the standard matrix multiplication $\left(\begin{smallmatrix}a & b\\
c & d
\end{smallmatrix}\right)\left(\begin{smallmatrix}z\\
1
\end{smallmatrix}\right)=\left(\begin{smallmatrix}az+b\\
cz+d
\end{smallmatrix}\right)$ and project it onto $R^{1}\CC$ by dividing the $x$-coordinate by
the $y$-coordinate. As scalars act trivially, we get an action of
$\mathrm{PGL}_{2}\left(\CC\right)\cong\GL_{2}\left(\CC\right)/\CC^{\times}$.
\end{defn}

By this definition, it is easy to see that 
\[
\KK_{1}^{n}\frac{b_{i}}{a_{i}}=\frac{b_{1}}{a_{1}+\frac{b_{2}}{a_{2}+\frac{\ddots}{\frac{b_{n}}{a_{n}+0}}}}=\left(\begin{smallmatrix}0 & b_{1}\\
1 & a_{1}
\end{smallmatrix}\right)\left(\begin{smallmatrix}0 & b_{2}\\
1 & a_{2}
\end{smallmatrix}\right)\cdots\left(\begin{smallmatrix}0 & b_{n}\\
1 & a_{n}
\end{smallmatrix}\right)\left(0\right).
\]
More over, when the continued fraction converges, we get that
\[
\KK_{1}^{\infty}\frac{b_{i}}{a_{i}}=\left(\begin{smallmatrix}0 & b_{1}\\
1 & a_{1}
\end{smallmatrix}\right)\left(\KK_{2}^{\infty}\frac{b_{i}}{a_{i}}\right).
\]

This Mobius presentation allows us to show an interesting recurrence
relation on the numerators and denominators of the convergents, which
generalizes the well known recurrence on simple continued fractions.
\begin{lem}
\label{lem:gcf-recursion}Let $a_{n},b_{n}$ be a sequence of integers.
Define $M_{n}=\left(\begin{smallmatrix}0 & b_{n}\\
1 & a_{n}
\end{smallmatrix}\right)$ and set $\left(\begin{smallmatrix}p_{n}\\
q_{n}
\end{smallmatrix}\right)=\left(\prod_{1}^{n-1}M_{i}\right)\left(\begin{smallmatrix}0\\
1
\end{smallmatrix}\right)$. Then $\frac{p_{n}}{q_{n}}=\prod_{1}^{n-1}M_{i}\left(0\right)=\KK_{1}^{n-1}\frac{b_{i}}{a_{i}}$
are the convergents of the generalized continued fraction presentation.
More over, we have that $\left(\begin{smallmatrix}p_{n-1} & p_{n}\\
q_{n-1} & q_{n}
\end{smallmatrix}\right)=\prod_{1}^{n-1}M_{i}$, implying the same recurrence relation on $p_{n}$ and $q_{n}$ given
by 
\begin{align*}
p_{n+1} & =a_{n}p_{n}+b_{n}p_{n-1}\\
q_{n+1} & =a_{n}q_{n}+b_{n}q_{n-1},
\end{align*}
with starting condition $p_{0}=1,\;p_{1}=0$ and $q_{0}=0,\;q_{1}=1$.
\end{lem}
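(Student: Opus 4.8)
The plan is to reduce everything to a single matrix identity and then read off the rest mechanically. Write $P_m:=\prod_{i=1}^{m}M_i$ for the partial products, with the convention $P_0=I$ (empty product), so that by definition $\left(\begin{smallmatrix}p_{n+1}\\ q_{n+1}\end{smallmatrix}\right)=P_n\left(\begin{smallmatrix}0\\ 1\end{smallmatrix}\right)$ is simply the second column of $P_n$. The key observation is that the first column of every $M_n=\left(\begin{smallmatrix}0 & b_n\\ 1 & a_n\end{smallmatrix}\right)$ is the vector $\left(\begin{smallmatrix}0\\ 1\end{smallmatrix}\right)$; hence from $P_n=P_{n-1}M_n$ the first column of $P_n$ equals $P_{n-1}\left(\begin{smallmatrix}0\\ 1\end{smallmatrix}\right)$, which is exactly the second column of $P_{n-1}$, namely $\left(\begin{smallmatrix}p_n\\ q_n\end{smallmatrix}\right)$. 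Applying this one step earlier, the first column of $P_{n-1}$ is $\left(\begin{smallmatrix}p_{n-1}\\ q_{n-1}\end{smallmatrix}\right)$, and therefore $P_{n-1}=\left(\begin{smallmatrix}p_{n-1} & p_n\\ q_{n-1} & q_n\end{smallmatrix}\right)$, which is the asserted matrix identity. I would also check the degenerate case $n=1$ here: $P_0=I$ forces $p_0=1,q_0=0,p_1=0,q_1=1$, so the stated initial conditions are a consequence rather than an imposition, and they are consistent with $\left(\begin{smallmatrix}p_1\\ q_1\end{smallmatrix}\right)=P_0\left(\begin{smallmatrix}0\\ 1\end{smallmatrix}\right)$.

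The recurrence then drops straight out of $P_n=P_{n-1}M_n$: expanding $\left(\begin{smallmatrix}p_n & p_{n+1}\\ q_n & q_{n+1}\end{smallmatrix}\right)=\left(\begin{smallmatrix}p_{n-1} & p_n\\ q_{n-1} & q_n\end{smallmatrix}\right)\left(\begin{smallmatrix}0 & b_n\\ 1 & a_n\end{smallmatrix}\right)$ and comparing the second columns gives $p_{n+1}=a_np_n+b_np_{n-1}$ and $q_{n+1}=a_nq_n+b_nq_{n-1}$. For the claim that $p_n/q_n$ are the convergents, I would invoke the Möbius--product identity already recorded in the excerpt, $\KK_1^{m}\frac{b_i}{a_i}=M_1M_2\cdots M_m(0)$; taking $m=n-1$ yields $\KK_1^{n-1}\frac{b_i}{a_i}=\bigl(\prod_{i=1}^{n-1}M_i\bigr)(0)=P_{n-1}(0)$. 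Applying the definition of the Möbius action to $P_{n-1}=\left(\begin{smallmatrix}p_{n-1} & p_n\\ q_{n-1} & q_n\end{smallmatrix}\right)$ at the point $0$ gives $P_{n-1}(0)=\frac{p_{n-1}\cdot 0+p_n}{q_{n-1}\cdot 0+q_n}=\frac{p_n}{q_n}$, closing the chain of equalities.

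There is no genuine obstacle: the entire content is the one-line remark that left-multiplying by $M_n$ slides the ``second column'' into the ``first column'' precisely because the first column of $M_n$ is $\left(\begin{smallmatrix}0\\ 1\end{smallmatrix}\right)$. The only thing to be careful about is index bookkeeping --- the product $\prod_1^{n-1}M_i$ has $n-1$ factors but is paired with the index-$n$ data $(p_n,q_n)$ --- so I would fix the convention $P_0=I$ at the outset and verify the $n=0,1$ cases explicitly, so that the boundary values $p_0=1,\ p_1=0,\ q_0=0,\ q_1=1$ emerge from the matrix identity instead of being declared separately.
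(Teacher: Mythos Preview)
Your proof is correct and follows essentially the same route as the paper: both arguments hinge on the observation that $M_n\left(\begin{smallmatrix}1\\0\end{smallmatrix}\right)=\left(\begin{smallmatrix}0\\1\end{smallmatrix}\right)$ (equivalently, that the first column of $M_n$ is $\left(\begin{smallmatrix}0\\1\end{smallmatrix}\right)$), use it to identify both columns of $P_{n-1}$, and then read off the recurrence and initial conditions from $P_0=I$ and $P_n=P_{n-1}M_n$. If anything, you are slightly more explicit than the paper in spelling out the final step $P_{n-1}(0)=p_n/q_n$ for the convergent claim.
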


\begin{proof}
Our definition of $\left(\begin{smallmatrix}p_{n}\\
q_{n}
\end{smallmatrix}\right)=\left(\prod_{1}^{n-1}M_{i}\right)\left(\begin{smallmatrix}0\\
1
\end{smallmatrix}\right)$ simply gives us the right column of $\prod_{1}^{n-1}M_{i}$. Since
$M_{i}\left(\begin{smallmatrix}1\\
0
\end{smallmatrix}\right)=\left(\begin{smallmatrix}0\\
1
\end{smallmatrix}\right)$, we see that 
\[
\left(\prod_{1}^{n-1}M_{i}\right)\left(\begin{smallmatrix}1\\
0
\end{smallmatrix}\right)=\left(\prod_{1}^{n-2}M_{i}\right)\left(\begin{smallmatrix}0\\
1
\end{smallmatrix}\right)=\left(\begin{smallmatrix}p_{n-1}\\
q_{n-1}
\end{smallmatrix}\right)\quad\forall n\geq2,
\]
and for $n=1$ we have $\overbrace{\left(\prod_{1}^{n-1}M_{i}\right)}^{=Id}\left(\begin{smallmatrix}1\\
0
\end{smallmatrix}\right)=\left(\begin{smallmatrix}p_{0}\\
q_{0}
\end{smallmatrix}\right)$ , so together we have that 
\[
\prod_{1}^{n-1}M_{i}=\left(\begin{smallmatrix}p_{n-1} & p_{n}\\
q_{n-1} & q_{n}
\end{smallmatrix}\right)\quad\forall n\geq1.
\]

From this equation we get the matrix recurrence $\left(\begin{smallmatrix}p_{n-1} & p_{n}\\
q_{n-1} & q_{n}
\end{smallmatrix}\right)M_{n}=\left(\begin{smallmatrix}p_{n} & p_{n+1}\\
q_{n} & q_{n+1}
\end{smallmatrix}\right)$ , implying the same recurrence on $p_{n},q_{n}$ .
\end{proof}

\newpage{}

\section{\label{sec:Finding-the-limit}Finding the limit of continued fractions}

The Euclidean division algorithm allows us to determine the (almost)
unique simple continued fraction expansion of any given number. However,
this unique expansion is many times difficult to write, as there doesn't
seem to be any pattern for the coefficients appearing in the expansion.
On the other hand, generalized continued fractions lack this unique
expansion property, rendering a single algorithm impractical, but
in many interesting cases have simple patterns which can be exploited.

In this section we describe the Euler polynomial continued fraction
derived from Euler's conversion which have nice polynomial patterns
on the one hand, and on the other hand are easy to compute in many
cases.

\subsection{\label{subsec:Constant-continued-fraction}Constant continued fraction}

Before describing the Euler polynomial continued fraction, let us
get some intuition from its simplest case, where the polynomials are
actually constant. This is not necessary for the proofs in the next
section, and is only used as a motivation.

Recall that a simple continued fraction is \textbf{eventually periodic}
if the coefficients $a_{i}$ in the expansion
\[
\frac{1}{a_{1}+\frac{1}{a_{2}+\frac{1}{a_{3}+\begin{aligned}\ddots\end{aligned}
}}}
\]
are eventually periodic, namely there is some positive integer $T$
such that $a_{i}=a_{i+T}$ for all $i$ large enough. It is well known
that the simple continued fraction of $\alpha$ is eventually periodic
if and only if $\alpha$ is algebraic of degree $2$, namely a root
of a polynomial in $\QQ\left[x\right]$ of degree $2$ (see chapter
3 in \cite{einsiedler_ergodic_2013}). In our more generalized setting,
this periodicity could be simplified to the case where $T=1$, to
obtain the following:
\begin{claim}
Let $A,B\in\RR$ with $B\neq0$. 
\begin{enumerate}
\item If $\alpha=\KK_{1}^{\infty}\frac{B}{A}$ converges to a finite number,
then $A\neq0$, $A^{2}+4B\geq0$ and $\alpha$ is a root for
\begin{equation}
x^{2}+Ax-B=0.\label{eq:constant_cf_root}
\end{equation}
\item On the other hand, if $A\neq0,\;A^{2}+4B\geq0$, then the two real
roots of \eqref{constant_cf_root} are exactly
\[
\KK_{1}^{\infty}\frac{B}{A}=\cfrac{B}{A+\cfrac{B}{A+\cfrac{B}{A+\ddots}}}\quad;\quad-A-\KK_{1}^{\infty}\frac{B}{A}=-\left[A+\cfrac{B}{A+\cfrac{B}{A+\cfrac{B}{A+\ddots}}}\right].
\]
\end{enumerate}
\end{claim}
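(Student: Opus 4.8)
The plan is to funnel everything through the matrix $M=\left(\begin{smallmatrix}0 & B\\ 1 & A\end{smallmatrix}\right)$ and the recurrence it induces. By \lemref{gcf-recursion} the $n$-th convergent is $\KK_1^n\frac{B}{A}=M^n(0)=p_{n+1}/q_{n+1}$, where $(p_k),(q_k)$ satisfy $x_{k+1}=Ax_k+Bx_{k-1}$ with $p_0=1,p_1=0,q_0=0,q_1=1$; moreover the special shape of $M$ gives the identity $p_k=Bq_{k-1}$ (it falls out of comparing the two sides of $M\cdot M^{k-1}=M^k$), so $\KK_1^n\frac{B}{A}=Bq_n/q_{n+1}$. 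The characteristic polynomial of the recurrence is $t^2-At-B$, with roots $\mu_1,\mu_2$ obeying $\mu_1\mu_2=-B$ and $\mu_1+\mu_2=A$; since $B\neq0$ both are nonzero, and since $x\mapsto-x$ carries the roots of $t^2-At-B$ to those of $x^2+Ax-B$, the two quadratics are mirror images — this is the bookkeeping that will eventually match the limit to \eqref{constant_cf_root}.

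For part (1) I would argue softly. If $\alpha:=\KK_1^\infty\frac{B}{A}$ exists and is finite, then passing to the limit in $M^{n+1}(0)=M(M^n(0))$ gives $\alpha=M(\alpha)$: the pole $\alpha=-A$ is impossible, since there $M(\alpha)=\infty$ while $B\neq0$, so $\alpha=\frac{B}{A+\alpha}$, i.e.\ $\alpha^2+A\alpha-B=0$. Being a limit of real numbers, $\alpha$ is real, so this quadratic has a real root and hence its discriminant $A^2+4B$ is $\geq0$. Finally $A\neq0$: when $A=0$ the sequence $M^n(0)$ alternates between $0$ and $\infty$ (equivalently $q_{2k}=0$), so it cannot converge to a finite number.

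For part (2), assume $A\neq0$ and $A^2+4B\geq0$. Then $\mu_1,\mu_2$ are real, and the hypothesis $A\neq0$ guarantees they cannot be distinct with equal modulus (distinct real roots of equal modulus would force $\mu_2=-\mu_1$, hence $A=\mu_1+\mu_2=0$). Solving the recurrence explicitly gives $q_k=\frac{\mu_1^k-\mu_2^k}{\mu_1-\mu_2}$ when $\mu_1\neq\mu_2$ and $q_k=k\mu^{k-1}$ when $\mu_1=\mu_2=\mu$; in both cases $q_k\neq0$ for $k\geq1$ and $q_{k+1}/q_k\to\mu_\ast$, where $\mu_\ast$ is the root of larger modulus (resp.\ the repeated root), which is nonzero. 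Hence $\KK_1^\infty\frac{B}{A}$ converges to $B/\mu_\ast$; using $\mu_\ast^2-A\mu_\ast-B=0$ this equals $\mu_\ast-A$, which one checks directly is a root of $x^2+Ax-B=0$. The remaining root is, by Vieta ($\mathrm{sum}=-A$), the number $-A-(\mu_\ast-A)=-\mu_\ast$, i.e.\ precisely $-A-\KK_1^\infty\frac{B}{A}$; this yields the displayed pair (a genuine pair when $A^2+4B>0$, a coincident one when $A^2+4B=0$).

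The step to be careful with is the convergence analysis in part (2): the repeated-root case $A^2+4B=0$ must be handled separately, since $q_k$ is then $k\mu^{k-1}$ rather than a difference of two geometric sequences, and the hypothesis $A\neq0$ is used precisely to exclude distinct real eigenvalues of equal modulus — exactly the configuration where $q_{k+1}/q_k$ oscillates and the continued fraction genuinely diverges. The fixed-point computation, the identity $p_k=Bq_{k-1}$, the ratio limits, and the Vieta step are all routine.
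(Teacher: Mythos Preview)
Your proof is correct. Part (1) is essentially identical to the paper's argument: fixed point of the Mobius map, exclusion of $\alpha=-A$ and $\alpha=\infty$, reality forcing the discriminant to be nonnegative, and the $A=0$ case ruled out by oscillation.

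For part (2) the paper and you take closely related but distinct routes. The paper writes down the explicit diagonalization of $S$ when $A^{2}+4B>0$ and an explicit Jordan-type factorization when $A^{2}+4B=0$ (conjugating by diagonal matrices to reduce to $\left(\begin{smallmatrix}1&1\\0&1\end{smallmatrix}\right)$), and then leaves the analysis of $S^{n}(0)$ as an exercise. You instead exploit the identity $p_{k}=Bq_{k-1}$ to reduce the convergent to the single ratio $Bq_{n}/q_{n+1}$, solve the scalar recurrence by the Binet formula (with the separate $q_{k}=k\mu^{k-1}$ in the repeated-root case), and read off the limit via $q_{k+1}/q_{k}\to\mu_{\ast}$. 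Both arguments rest on the same eigenvalue dichotomy, and your observation that $A\neq0$ is exactly what rules out distinct real roots of equal modulus is the same content as the paper's implicit use of $\alpha_{+}\neq-\alpha_{-}$. Your presentation has the advantage of being fully carried out and of avoiding the somewhat ad hoc matrix factorization in the Jordan case; the paper's version makes the link to the Mobius picture (and to the later coboundary viewpoint in \secref{The-most-general}) more visible.
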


\begin{proof}
\begin{enumerate}
\item Let $S=\begin{pmatrix}0 & B\\
1 & A
\end{pmatrix}$, and suppose that $\alpha=\limfi nS^{n}\left(0\right)=\KK_{1}^{\infty}\frac{B}{A}$
converges. Note that $A\neq0$, since otherwise $S^{n}\left(0\right)$
would alternate between $0$ and $\infty$.\\
Since $x\mapsto S\left(x\right)$ is continues, we get that $\alpha=S\left(\alpha\right)$
is a fixed point and therefore $\frac{B}{\alpha+A}=\alpha$. By definition,
for $\alpha=\infty$ the two sides are $0\neq\infty$ and for $\alpha=-A$
the two sides are $\infty\neq-A$, so we conclude that $\alpha\neq\infty,-A$.
Hence, we can multiply both sides by $\left(\alpha+A\right)$ to get
that 
\[
\alpha^{2}+A\alpha-B=0.
\]
As $\alpha$ is a limit of real numbers, it is real, and therefore
the discriminant $A^{2}+4B\geq0$ is nonnegative. 
\item The other direction is a standard exercise in linear algebra using
the diagonalization or the Jordan form of (almost) $S$. Indeed, letting
$\alpha_{\pm}=\frac{A\pm\sqrt{A^{2}+4B}}{2}$ be the eigenvalues of
$S$, we can write it as 
\[
S=\begin{cases}
\begin{pmatrix}\alpha_{-} & \alpha_{+}\\
-1 & -1
\end{pmatrix}\begin{pmatrix}\alpha_{+} & 0\\
0 & \alpha_{-}
\end{pmatrix}\begin{pmatrix}\alpha_{-} & \alpha_{+}\\
-1 & -1
\end{pmatrix}^{-1} & A^{2}+4B>0\\
\begin{pmatrix}A/2 & 0\\
0 & 1
\end{pmatrix}\begin{pmatrix}-1 & 1\\
1 & 0
\end{pmatrix}\begin{pmatrix}1 & 1\\
0 & 1
\end{pmatrix}\begin{pmatrix}-1 & 1\\
1 & 0
\end{pmatrix}^{-1}\begin{pmatrix}1 & 0\\
0 & A/2
\end{pmatrix} & A^{2}+4B=0.
\end{cases}
\]
The rest is a standard analysis of $S^{n}\begin{pmatrix}0\\
1
\end{pmatrix}$. It uses the standard trick of $\left(PMP^{-1}\right)^{n}=PM^{n}P^{-1}$
and the second Mobius trick where
\[
\left[\begin{pmatrix}A/2 & 0\\
0 & 1
\end{pmatrix}M\begin{pmatrix}1 & 0\\
0 & A/2
\end{pmatrix}\right]^{n}\left(0\right)=\begin{pmatrix}1 & 0\\
0 & 2/A
\end{pmatrix}\left[\begin{pmatrix}A/2 & 0\\
0 & A/2
\end{pmatrix}M\right]^{n}\begin{pmatrix}1 & 0\\
0 & A/2
\end{pmatrix}\left(0\right)=\begin{pmatrix}1 & 0\\
0 & 2/A
\end{pmatrix}M^{n}\left(0\right),
\]
namely that as Mobius transformations the diagonal matrices fix $0$
and scalar matrices fix any number. We leave the rest as an exercise
to the reader.
\end{enumerate}
\end{proof}
To summarize the results so far, in order to understand $\KK_{1}^{\infty}\frac{B}{A}$,
namely the continued fraction arising from the matrix $S=\begin{pmatrix}0 & B\\
1 & A
\end{pmatrix}$, we need to find solutions for $\alpha_{+}+\alpha_{-}=-A$ and $\alpha_{+}\alpha_{-}=-B$,
namely the roots for characteristic polynomial of $(-S)$, namely
$x^{2}+Ax-B=0$. Also, along the proof we saw that some continued
fractions expansions can be ``simplified'' by using the fact that
scalar matrices act trivially. In general, our matrix will be a non
constant polynomial, however as we shall see, it still has a similar
behavior to the constant matrix type.

\subsection{\label{subsec:Euler's-formula}Euler's formula}

One of the most elementary and useful continued fraction presentation
was introduced by Euler who found a way to convert standard finite
sums (and their infinite sum limits) to generalized continued fraction.
\begin{thm}[Euler's formula]
 Let $r_{i}\in\CC$ for $i\geq1$. Then
\[
1+r_{1}+r_{1}r_{2}+\cdots+r_{1}\cdots r_{n}=\sum_{k=0}^{n}\left(\prod_{i=1}^{k}r_{i}\right)=\frac{1}{1+\KK_{1}^{n}\frac{-r_{i}}{1+r_{i}}}.
\]
By taking the limit (if exists), we have that 
\[
\KK_{1}^{\infty}\frac{-r_{i}}{1+r_{i}}=\frac{1}{\sum_{k=0}^{\infty}\prod_{i=1}^{k}r_{i}}-1.
\]
\end{thm}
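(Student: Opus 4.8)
The plan is to prove the finite identity by induction on $n$, and then obtain the infinite version by passing to the limit. Before starting the induction, I would rephrase the right-hand side in terms of the Mobius/matrix formalism already set up in \lemref{gcf-recursion}: with $b_i = -r_i$ and $a_i = 1 + r_i$, the finite continued fraction $\KK_1^n \frac{-r_i}{1+r_i}$ equals $M_1 M_2 \cdots M_n (0)$ where $M_i = \left(\begin{smallmatrix}0 & -r_i\\ 1 & 1+r_i\end{smallmatrix}\right)$. So the claim $1 + \sum_{k=0}^n \prod_{i=1}^k r_i = \frac{1}{1 + M_1\cdots M_n(0)}$ is really a statement about the matrix product $M_1 \cdots M_n$, which makes it a clean target for induction.

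For the induction itself I would proceed from the inside out rather than the outside in, which is the natural direction for continued fractions. Set $S_n := \sum_{k=0}^n \prod_{i=1}^k r_i$ (so $S_0 = 1$, and $S_n = 1 + r_1 S'_{n-1}$ where $S'_{n-1} = \sum_{k=0}^{n-1}\prod_{i=2}^{k+1} r_i$ is the analogous sum built from $r_2, r_3, \dots$). The inductive hypothesis, applied to the shifted sequence $r_2, r_3, \ldots, r_n$, gives $\KK_2^n \frac{-r_i}{1+r_i} = \frac{1}{S'_{n-1}} - 1 = \frac{1 - S'_{n-1}}{S'_{n-1}}$. Then I would compute one more step: $\KK_1^n \frac{-r_i}{1+r_i} = \cfrac{-r_1}{1 + r_1 + \KK_2^n \frac{-r_i}{1+r_i}}$, substitute the hypothesis, and simplify the resulting expression $\frac{-r_1}{1 + r_1 + \frac{1-S'_{n-1}}{S'_{n-1}}}$. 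The denominator becomes $\frac{(1+r_1)S'_{n-1} + 1 - S'_{n-1}}{S'_{n-1}} = \frac{r_1 S'_{n-1} + 1}{S'_{n-1}} = \frac{S_n}{S'_{n-1}}$, so the whole thing is $\frac{-r_1 S'_{n-1}}{S_n}$. It then remains to check $1 + \frac{-r_1 S'_{n-1}}{S_n} = \frac{S_n - r_1 S'_{n-1}}{S_n} = \frac{1}{S_n}$, which holds exactly because $S_n - r_1 S'_{n-1} = 1$. The base case $n=0$ (or $n=1$) is a direct check. I would also need to note the degenerate cases where some $r_i = -1$ or some partial sum vanishes, so that the reciprocals and the Mobius evaluations are legitimate; in the generic statement these are handled by the $\CC\cup\{\infty\}$ conventions already fixed in the definitions.

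The main obstacle, such as it is, is purely bookkeeping: getting the shifted-sequence sums $S'_{n-1}$ versus $S_n$ lined up correctly so that the algebraic identity $S_n = 1 + r_1 S'_{n-1}$ is visibly what collapses the fraction. There is no analytic difficulty in the finite identity. For the infinite statement, the passage to the limit is immediate: the hypothesis is that $\sum_{k=0}^\infty \prod_{i=1}^k r_i$ converges (to a finite nonzero value, implicitly, for the right-hand side to make sense), and since $x \mapsto \frac{1}{x} - 1$ is continuous on $\CC \setminus \{0\}$, taking $n \to \infty$ in $\KK_1^n \frac{-r_i}{1+r_i} = \frac{1}{S_n} - 1$ yields $\KK_1^\infty \frac{-r_i}{1+r_i} = \frac{1}{\sum_{k=0}^\infty \prod_{i=1}^k r_i} - 1$, which is exactly the claim.
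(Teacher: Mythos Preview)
Your proposal is correct and matches the paper's approach exactly: the paper simply states ``This is a standard induction, which we leave as an exercise to the reader,'' and your inside-out induction (peeling off the first term via the identity $S_n = 1 + r_1 S'_{n-1}$ and recursing on the shifted sequence) is precisely the standard argument being gestured at. Your handling of the limit and of the degenerate cases via the $\CC\cup\{\infty\}$ conventions is also appropriate.
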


\begin{proof}
This is a standard induction, which we leave as an exercise to the
reader.
\end{proof}
For more details and applications of this formula, the reader is referred
to \cite{jones_continued_1980}.

Euler's formula implies that whenever $a_{i}+b_{i}=\left(1+r_{i}\right)+\left(-r_{i}\right)=1$
we can go back from generalized continued fractions $\KK_{1}^{\infty}\frac{b_{i}}{a_{i}}$
to infinite sums, where we have many more tools at our disposal. However,
in general this condition doesn't hold, though fortunately Euler had
a trick to move to equivalent expansion where it might hold (which
we already saw in the constant continued fractions case).

\newpage{}
\begin{lem}[The equivalence transformation]
\label{lem:equivalence-transformation} Let $a_{i},b_{i}\in\CC$
be two sequences and $0\neq c_{i}\in\CC$ another sequence with nonzero
elements. Then
\[
\KK_{1}^{n}\frac{b_{i}}{a_{i}}=\frac{1}{c_{0}}\KK_{1}^{n}\frac{c_{i-1}c_{i}b_{i}}{c_{i}a_{i}}.
\]
\end{lem}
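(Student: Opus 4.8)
The plan is to lift the claimed scalar identity to a statement about the $2\times2$ matrices attached to each continued fraction through the Möbius dictionary of \lemref{gcf-recursion}, where it becomes an exact matrix factorization; the two facts that make everything collapse are that scalar matrices act trivially under the Möbius action and that $\mathrm{diag}(1,c)$ fixes $0$.

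First I would record the building-block factorization. Let $M_i=\left(\begin{smallmatrix}0 & b_i\\ 1 & a_i\end{smallmatrix}\right)$ be the matrix of the left-hand continued fraction and $N_i=\left(\begin{smallmatrix}0 & c_{i-1}c_ib_i\\ 1 & c_ia_i\end{smallmatrix}\right)$ that of the right-hand one. A one-line multiplication verifies
\[
N_i=\begin{pmatrix}c_{i-1} & 0\\ 0 & 1\end{pmatrix}M_i\begin{pmatrix}1 & 0\\ 0 & c_i\end{pmatrix},
\]
the left factor scaling the top row of $M_i$ by $c_{i-1}$, the right factor scaling the right column by $c_i$, and the bottom-left entry staying $1$ because neither diagonal factor disturbs it.

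Next I would telescope the product. Writing $E_j=\mathrm{diag}(c_j,1)$ and $F_j=\mathrm{diag}(1,c_j)$, the previous identity gives
\[
\prod_{i=1}^{n}N_i=E_0\,M_1\,F_1E_1\,M_2\,F_2E_2\cdots E_{n-1}\,M_n\,F_n,
\]
and since $F_jE_j=c_jI$ is a scalar matrix, the $n-1$ interior pairs collapse to scalars, leaving
\[
\prod_{i=1}^{n}N_i=\left(\prod_{j=1}^{n-1}c_j\right)E_0\left(\prod_{i=1}^{n}M_i\right)F_n.
\]

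Finally, I would evaluate both sides at $0$ as Möbius transformations. By \lemref{gcf-recursion}, $\left(\prod_{i=1}^n M_i\right)(0)=\KK_1^{n}\frac{b_i}{a_i}$ and $\left(\prod_{i=1}^n N_i\right)(0)=\KK_1^{n}\frac{c_{i-1}c_ib_i}{c_ia_i}$; the scalar matrix $\prod_j c_j\cdot I$ acts as the identity, $F_n$ sends $0$ to $0$, and $E_0$ is the map $z\mapsto c_0z$. Chaining these equalities gives $\KK_1^{n}\frac{c_{i-1}c_ib_i}{c_ia_i}=c_0\,\KK_1^{n}\frac{b_i}{a_i}$, which is the asserted identity after dividing by $c_0\neq0$. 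There is no serious obstacle: the only thing to watch is the edge-case conventions of the Möbius action at $\infty$, but these cause no trouble since every $c_i$ is nonzero, so $E_0$ and $F_n$ are genuine bijections of $\CC\cup\{\infty\}$ with $F_n(0)=0$; the single piece of insight required is spotting the factorization of $N_i$ together with the telescoping $F_jE_j=c_jI$.
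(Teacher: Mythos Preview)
Your proof is correct and follows essentially the same approach as the paper: both arguments use the Möbius/matrix dictionary, the same factorization $\left(\begin{smallmatrix}0 & c_{i-1}c_ib_i\\ 1 & c_ia_i\end{smallmatrix}\right)=\left(\begin{smallmatrix}c_{i-1} & 0\\ 0 & 1\end{smallmatrix}\right)\left(\begin{smallmatrix}0 & b_i\\ 1 & a_i\end{smallmatrix}\right)\left(\begin{smallmatrix}1 & 0\\ 0 & c_i\end{smallmatrix}\right)$, the telescoping via $\mathrm{diag}(1,c_j)\mathrm{diag}(c_j,1)=c_jI$, and the boundary observations that the relevant diagonal matrix fixes $0$ while the other scales by $c_0$. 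The only cosmetic difference is direction: the paper starts from $\prod M_i$ and inserts scalar identities, whereas you start from $\prod N_i$ and telescope outward.
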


\begin{proof}
Intuitively, this lemma follows from the fact that $\frac{b_{i}}{a_{i}+x}=\frac{c_{i}b_{i}}{c_{i}a_{i}+c_{i}x}$
plus induction. For example
\[
\frac{4}{11}=\frac{1}{2+\frac{1}{1+\frac{1}{3}}}=\frac{1}{{\color{red}\boldsymbol{c_{0}}}}\cdot\frac{{\color{red}\boldsymbol{c_{0}}}}{2+\frac{1}{1+\frac{1}{3}}}=\frac{1}{c_{0}}\cdot\frac{c_{0}{\color{red}\boldsymbol{c_{1}}}}{2{\color{red}\boldsymbol{c_{1}}}+\frac{{\color{red}\boldsymbol{c_{1}}}}{1+\frac{1}{3}}}=\frac{1}{c_{0}}\cdot\frac{c_{0}c_{1}}{2c_{1}+\frac{c_{1}{\color{red}\boldsymbol{c_{2}}}}{{\color{red}\boldsymbol{c_{2}}}+\frac{{\color{red}\boldsymbol{c_{2}}}}{3}}}=\frac{1}{c_{0}}\cdot\frac{c_{0}c_{1}}{2c_{1}+\frac{c_{1}c_{2}}{c_{2}+\frac{c_{2}{\color{red}\boldsymbol{c_{3}}}}{3{\color{red}\boldsymbol{c_{3}}}}}}.
\]
More precisely, recall that $\KK_{1}^{n}\frac{b_{i}}{a_{i}}=\left[\prod_{1}^{n}\left(\begin{smallmatrix}0 & b_{i}\\
1 & a_{i}
\end{smallmatrix}\right)\right]\left(0\right)$. As Mobius transformations defined by scalar matrices are the identity,
we get that
\begin{align*}
\KK_{1}^{n}\frac{b_{i}}{a_{i}}=\left[\prod_{1}^{n}\left(\begin{smallmatrix}0 & b_{i}\\
1 & a_{i}
\end{smallmatrix}\right)\right]\left(0\right) & =\left[\prod_{1}^{n}\left(\begin{smallmatrix}0 & b_{i}\\
1 & a_{i}
\end{smallmatrix}\right)\cdot c_{i}I\right]\left(0\right)=\left[\prod_{1}^{n}\left(\begin{smallmatrix}0 & b_{i}\\
1 & a_{i}
\end{smallmatrix}\right)\cdot\left(\begin{smallmatrix}1 & 0\\
0 & c_{i}
\end{smallmatrix}\right)\left(\begin{smallmatrix}c_{i} & 0\\
0 & 1
\end{smallmatrix}\right)\right]\left(0\right)\\
 & =\left(\begin{smallmatrix}c_{0}^{-1} & 0\\
0 & 1
\end{smallmatrix}\right)\left[\prod_{1}^{n}\left(\left(\begin{smallmatrix}c_{i-1} & 0\\
0 & 1
\end{smallmatrix}\right)\left(\begin{smallmatrix}0 & b_{i}\\
1 & a_{i}
\end{smallmatrix}\right)\left(\begin{smallmatrix}1 & 0\\
0 & c_{i}
\end{smallmatrix}\right)\right)\right]\left(\begin{smallmatrix}c_{n} & 0\\
0 & 1
\end{smallmatrix}\right)\left(0\right)\\
 & =\left(\begin{smallmatrix}c_{0}^{-1} & 0\\
0 & 1
\end{smallmatrix}\right)\left[\prod_{1}^{n}\left(\begin{smallmatrix}0 & c_{i-1}c_{i}b_{i}\\
1 & c_{i}a_{i}
\end{smallmatrix}\right)\right]\left(\begin{smallmatrix}c_{n} & 0\\
0 & 1
\end{smallmatrix}\right)\left(0\right)
\end{align*}
Since $\left(\begin{smallmatrix}c_{n} & 0\\
0 & 1
\end{smallmatrix}\right)\left(\begin{smallmatrix}0\\
1
\end{smallmatrix}\right)=\left(\begin{smallmatrix}0\\
1
\end{smallmatrix}\right)$ , we conclude that
\[
\KK_{1}^{n}\frac{b_{i}}{a_{i}}=\frac{1}{c_{0}}\KK_{1}^{n}\frac{c_{i-1}c_{i}b_{i}}{c_{i}a_{i}}.
\]
\end{proof}
\begin{rem}
In the last lemma we basically moved from the matrices $\left(\begin{smallmatrix}0 & b_{i}\\
1 & a_{i}
\end{smallmatrix}\right)$ to $\left(c_{n-1}U_{n-1}\right)^{-1}M_{n}U_{n}$ with $U_{n}=\left(\begin{smallmatrix}1 & 0\\
0 & c_{n}
\end{smallmatrix}\right)$. This type of equivalence can be generalized, as we shall see it
later in \secref{The-most-general}.
\end{rem}

\medskip{}

Combining the last lemma and Euler's formula, we are led to look for
$c_{n}$ satisfying
\[
c_{n}a_{n}+c_{n-1}c_{n}b_{n}=1.
\]
If we can find such $c_{n}$, then we have the following.
\begin{cor}
\label{cor:c-n-conversion}Let $a_{i},b_{i}\in\CC$ be any sequences
and suppose that we can find a solution to 
\[
c_{i}a_{i}+c_{i-1}c_{i}b_{i}=1
\]
with nonzero $c_{i}$. Then
\[
\KK_{1}^{n}\frac{b_{i}}{a_{i}}=\frac{1}{c_{0}}\KK_{1}^{n}\frac{\left(c_{i-1}c_{i}b_{i}\right)}{\left(c_{i}a_{i}\right)}=\frac{1}{c_{0}}\left(\frac{1}{\sum_{k=0}^{n}\left(-1\right)^{k}\prod_{i=1}^{k}\left(c_{i-1}c_{i}b_{i}\right)}-1\right),
\]
or equivalently
\[
\sum_{k=0}^{n}\left(-1\right)^{k}\prod_{i=1}^{k}\left(c_{i-1}c_{i}b_{i}\right)=\frac{1}{1+c_{0}\cdot\KK_{1}^{n}\frac{b_{i}}{a_{i}}}=\left(\begin{smallmatrix}0 & 1\\
1 & c_{0}
\end{smallmatrix}\right)\left(\KK_{1}^{n}\frac{b_{i}}{a_{i}}\right).
\]

\newpage{}
\end{cor}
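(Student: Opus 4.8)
The plan is to chain together the equivalence transformation (\lemref{equivalence-transformation}) with Euler's formula, both already proved above. Since the hypothesis provides \emph{nonzero} $c_i$, \lemref{equivalence-transformation} applies directly and yields the first equality
\[
\KK_1^n\frac{b_i}{a_i}=\frac{1}{c_0}\,\KK_1^n\frac{c_{i-1}c_ib_i}{c_ia_i},
\]
so the whole statement reduces to evaluating the transformed continued fraction $\KK_1^n\frac{c_{i-1}c_ib_i}{c_ia_i}$.

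First I would introduce $r_i:=-c_{i-1}c_ib_i$ for $i\geq 1$. The assumed relation $c_ia_i+c_{i-1}c_ib_i=1$ rearranges to $c_ia_i=1-c_{i-1}c_ib_i=1+r_i$, while by definition $c_{i-1}c_ib_i=-r_i$. Hence the transformed continued fraction is literally $\KK_1^n\frac{-r_i}{1+r_i}$, i.e. exactly the object appearing in Euler's formula. Applying that formula (the finite, pre-limit version) and substituting $\prod_{i=1}^k r_i=(-1)^k\prod_{i=1}^k(c_{i-1}c_ib_i)$ gives
\[
\KK_1^n\frac{c_{i-1}c_ib_i}{c_ia_i}=\frac{1}{\sum_{k=0}^n(-1)^k\prod_{i=1}^k(c_{i-1}c_ib_i)}-1 ,
\]
and multiplying by $1/c_0$ and using the first equality produces the main displayed identity of the corollary.

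For the ``equivalently'' reformulation I would simply solve that identity for the alternating sum. Writing $w:=\KK_1^n\frac{b_i}{a_i}$ and $S:=\sum_{k=0}^n(-1)^k\prod_{i=1}^k(c_{i-1}c_ib_i)$, the main identity reads $c_0w=\frac1S-1$, hence $S(1+c_0w)=1$, i.e. $S=\frac{1}{1+c_0w}$, which is precisely the claimed identity, displayed there in its Mobius form. I do not expect a genuine obstacle: the corollary is a bookkeeping combination of two results already in hand. The only points needing a little care are the sign and index bookkeeping when passing between $r_i$ and $c_{i-1}c_ib_i$, and the degenerate cases (a vanishing partial sum, or $\infty$ occurring as an intermediate value), but these are already absorbed into the $\CC\cup\{\infty\}$ conventions under which \lemref{equivalence-transformation} and Euler's formula were stated, so they require no separate treatment.
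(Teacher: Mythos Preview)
Your proposal is correct and follows exactly the approach the paper intends: the corollary is stated immediately after the sentence ``Combining the last lemma and Euler's formula, we are led to look for $c_n$ satisfying $c_na_n+c_{n-1}c_nb_n=1$,'' with no separate proof given. Your substitution $r_i=-c_{i-1}c_ib_i$ to recognize the transformed fraction as $\KK_1^n\frac{-r_i}{1+r_i}$, followed by Euler's formula, is precisely the combination the paper has in mind.
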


\begin{example}[The exponential function]
\label{exa:(The-exponential-function)} Given some $x\in\RR$, we start with the standard Taylor expansion
for $e^{x}$:
\[
e^{x}=1+x+\frac{x^{2}}{2}+\frac{x^{3}}{3!}+\cdots=\sum_{0}^{\infty}\frac{x^{n}}{n!}=\sum_{0}^{\infty}\prod_{1}^{n}\left(\frac{x}{i}\right).
\]
Taking $r_{i}=\frac{x}{i}$ in Euler's formula we get that 
\[
e^{x}=\frac{1}{1+\KK_{1}^{\infty}\frac{-x/i}{1+x/i}}.
\]
We would like to use the equivalence transformation with $c_{i}=i$
so as to remove the division in the numerators and denominators, however
since $c_{0}=0$ we cannot directly do it. Instead, we will apply
it starting from the second index, namely
\begin{align*}
\KK_{i=1}^{\infty}\frac{-x/i}{1+x/i} & =\frac{-x}{1+x+\KK_{i=2}^{\infty}\frac{-x/i}{1+x/i}}=\frac{-x}{1+x+\frac{1}{1}\KK_{2}^{\infty}\frac{-x\left(i-1\right)}{i+x}}=\frac{-x}{1+x+\KK_{1}^{\infty}\frac{-xi}{1+i+x}},
\end{align*}
so that 
\[
e^{x}=\frac{1}{1-\frac{x}{1+x-\frac{x}{2+x-\frac{2x}{3+x-\frac{3x}{4+x-\frac{4x}{\ddots}}}}}}.
\]

In particular, for $x=1$ and $x=-1$ we get that 
\[
\frac{2-e}{e-1}=\KK_{1}^{\infty}\frac{-i}{2+i}\quad,\quad\frac{1}{e-1}=\KK_{1}^{\infty}\frac{i}{i}.
\]

Similar computation can be done with other functions like $\sin\left(x\right),\cos\left(x\right),\ln\left(1+x\right)$
etc.\medskip{}
\end{example}

Finding $c_{n}$ which satisfy the relation in \corref{c-n-conversion}
above is equivalent to solving the recurrence
\[
c_{i}:=\frac{1}{a_{i}+c_{i-1}b_{i}}=\left(\begin{smallmatrix}0 & 1\\
b_{i} & a_{i}
\end{smallmatrix}\right)\left(c_{i-1}\right).
\]

Of course, the hard part is not to find some sequence $c_{i}$ satisfying
this relation, but a ``nice enough'' such sequence for which we
can compute $\sum_{k=0}^{n}\left(-1\right)^{k}\prod_{i=1}^{k}\left(c_{i-1}c_{i}b_{i}\right)$.
The matrix above is the transpose of our original polynomial continued
fraction matrix. Transposing it, and thinking of $c_{i}=\frac{F_{i}}{F_{i+1}}$
as the projection of $\left(F_{i},F_{i+1}\right)$, we get the recursion:
\[
\left(\begin{smallmatrix}F_{i-1} & F_{i}\end{smallmatrix}\right)\left(\begin{smallmatrix}0 & b_{i}\\
1 & a_{i}
\end{smallmatrix}\right)=\left(\begin{smallmatrix}F_{i} & F_{i+1}\end{smallmatrix}\right)\quad\iff\quad F_{i-1}b_{i}+F_{i}a_{i}=F_{i+1}.
\]
This is exactly the recurrence satisfied by $p_{n}$ and $q_{n}$
we saw in \lemref{gcf-recursion} (so that both $c_{i}=\frac{p_{i}}{p_{i+1}}$
and $c_{i}=\frac{q_{i}}{q_{i+1}}$ solve the recurrence above). Thus,
in a sense, finding one such ``nice'' solution to the recursion,
let us find both $p_{n}$ and $q_{n}$.

Next, we try to give simple conditions on $a_{i},b_{i}$ where we
can find a ``nice'' solution for the recurrence above. We have already
seen in \subsecref{Constant-continued-fraction} one such case where
both $a_{i}\equiv A,\;b_{i}\equiv B$ are fixed, so that our matrix
is $M_{i}=M=\left(\begin{smallmatrix}0 & B\\
1 & A
\end{smallmatrix}\right)$, and then $\left(F_{n},F_{n+1}\right)=\left(F_{0},F_{1}\right)M^{n}$.
In this simple recursion, we used the structure of $M$, whether it
is diagonalizable or not, in order to find $F_{n}$. This has led
us to look for solutions $h_{1},h_{2}$ of $x^{2}+Ax-B=0$, namely
$h_{1}\cdot h_{2}=-B$ and $h_{1}+h_{2}=-A$ and then consider their
$n$-th powers. In the case where the roots are the same, instead
of just taking $h_{1}^{n}=h_{2}^{n}$, there is also a polynomial
involved in the solutions.

More generally, our recurrence depends on $i$, and the ``right''
way to think about exponential is more like factorial, so we should
look for $F_{n}$ of the form $f\left(n\right)\cdot\prod_{1}^{n}h\left(k\right)$
for some polynomials $f,h$ , or in the $c_{n}$ notation we have
$c_{n}=\frac{F_{n}}{F_{n+1}}=\frac{f\left(n\right)}{f\left(n+1\right)h\left(n+1\right)}$.

With this quadratic intuition in mind, we restate and prove \thmref{recurrence-roots}

\newpage{}

\setcounter{thm}{0}
\begin{thm}[Euler continued fractions]
\footnote{It has come to our attention that Euler doesn't have enough mathematical
objects named after him.}Let $h_{1},h_{2},f:\CC\to\CC$ be any functions, and define $a,b:\CC\to\CC$
such that 
\begin{align*}
b\left(x\right) & =-h_{1}\left(x\right)h_{2}\left(x\right)\\
f\left(x\right)a\left(x\right) & =f\left(x-1\right)h_{1}\left(x\right)+f\left(x+1\right)h_{2}\left(x+1\right)
\end{align*}
Then taking $F_{n}=f\left(n\right)\cdot\prod_{1}^{n}h_{2}\left(k\right)$
solves the recurrence relation
\[
F_{n-1}b\left(n\right)+F_{n}a\left(n\right)=F_{n+1},
\]
and we get that
\[
\KK_{1}^{n}\frac{b\left(i\right)}{a\left(i\right)}=\frac{f\left(1\right)h_{2}\left(1\right)}{f\left(0\right)}\left(\frac{1}{\sum_{k=0}^{n}\frac{f\left(0\right)f\left(1\right)}{f\left(k\right)f\left(k+1\right)}\prod_{i=1}^{k}\left(\frac{h_{1}\left(i\right)}{h_{2}\left(i+1\right)}\right)}-1\right).
\]
\end{thm}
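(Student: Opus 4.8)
The plan is to reduce the entire statement to \corref{c-n-conversion} by exhibiting the correct equivalence sequence $c_i$. First I would dispatch the recurrence claim by a direct substitution. Writing $F_n=f\left(n\right)\prod_{1}^{n}h_{2}\left(k\right)$ and using $b\left(n\right)=-h_{1}\left(n\right)h_{2}\left(n\right)$, one pulls the factor $h_{2}\left(n\right)$ out of $F_{n-1}b\left(n\right)$ to get $F_{n-1}b\left(n\right)+F_{n}a\left(n\right)=\left(\prod_{1}^{n}h_{2}\left(k\right)\right)\bigl(f\left(n\right)a\left(n\right)-f\left(n-1\right)h_{1}\left(n\right)\bigr)$, and the defining relation $f\left(n\right)a\left(n\right)=f\left(n-1\right)h_{1}\left(n\right)+f\left(n+1\right)h_{2}\left(n+1\right)$ turns the bracket into $f\left(n+1\right)h_{2}\left(n+1\right)$, which rebuilds $F_{n+1}$. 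This is the only genuinely computational step, and it is short.

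Next, recall from the discussion preceding the theorem that if $c_{i}=F_{i}/F_{i+1}$ for a sequence solving $F_{i-1}b_{i}+F_{i}a_{i}=F_{i+1}$, then $c_{i}a_{i}+c_{i-1}c_{i}b_{i}=\left(F_{i}a_{i}+F_{i-1}b_{i}\right)/F_{i+1}=1$, so $c_i$ satisfies precisely the hypothesis of \corref{c-n-conversion}. With the $F_n$ above, the $h_2$-product telescopes to a single factor, giving $c_{i}=\frac{f\left(i\right)}{f\left(i+1\right)h_{2}\left(i+1\right)}$. I would note here the nonvanishing conditions on $f$ and $h_{2}$ at the relevant integers that are needed to make the $c_i$ well-defined and nonzero; these are implicit in the theorem anyway, since otherwise the stated formula divides by zero.

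Then the proof is just simplifying the two ingredients appearing in \corref{c-n-conversion}. The prefactor is $\frac{1}{c_{0}}=\frac{f\left(1\right)h_{2}\left(1\right)}{f\left(0\right)}$. For the summand, $c_{i-1}c_{i}b_{i}=\frac{f\left(i-1\right)}{f\left(i\right)h_{2}\left(i\right)}\cdot\frac{f\left(i\right)}{f\left(i+1\right)h_{2}\left(i+1\right)}\cdot\bigl(-h_{1}\left(i\right)h_{2}\left(i\right)\bigr)=-\frac{f\left(i-1\right)h_{1}\left(i\right)}{f\left(i+1\right)h_{2}\left(i+1\right)}$, so the $\left(-1\right)^{k}$ in $\left(-1\right)^{k}\prod_{i=1}^{k}\left(c_{i-1}c_{i}b_{i}\right)$ is cancelled by the $k$ minus signs, leaving $\prod_{i=1}^{k}\frac{f\left(i-1\right)h_{1}\left(i\right)}{f\left(i+1\right)h_{2}\left(i+1\right)}$. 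The $f$-part telescopes as $\prod_{i=1}^{k}\frac{f\left(i-1\right)}{f\left(i+1\right)}=\frac{f\left(0\right)f\left(1\right)}{f\left(k\right)f\left(k+1\right)}$ (with the $k=0$ case checked separately, both sides being $1$). Substituting $\frac{1}{c_0}$ and $\sum_{k=0}^{n}\left(-1\right)^{k}\prod_{i=1}^{k}\left(c_{i-1}c_{i}b_{i}\right)=\sum_{k=0}^{n}\frac{f\left(0\right)f\left(1\right)}{f\left(k\right)f\left(k+1\right)}\prod_{i=1}^{k}\frac{h_{1}\left(i\right)}{h_{2}\left(i+1\right)}$ into \corref{c-n-conversion} yields the claimed identity verbatim.

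There is no deep obstacle here; the work is entirely bookkeeping. The points that need care are the telescoping identity for the $f$-product together with its $k=0$ edge case, the sign cancellation, and recording the implicit nonvanishing hypotheses on $f$ and $h_2$ that justify applying \corref{c-n-conversion}.
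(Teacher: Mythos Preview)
Your proposal is correct and follows essentially the same route as the paper: verify the recurrence for $F_n$ by direct substitution, set $c_n=F_n/F_{n+1}$, and plug into \corref{c-n-conversion}, then telescope. You spell out the sign cancellation, the $f$-telescoping, and the implicit nonvanishing hypotheses more carefully than the paper does, but the argument is the same.
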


\begin{proof}
Simply putting the definition of $a,b,F$ in the recurrence gives
us
\[
F_{n}a\left(n\right)+F_{n-1}b\left(n\right)-F_{n+1}=\left[\prod_{1}^{n}h_{2}\left(k\right)\right]\left(f\left(n\right)a\left(n\right)-f\left(n-1\right)h_{1}\left(n\right)-f\left(n+1\right)h_{2}\left(n+1\right)\right)=0.
\]
Using \corref{c-n-conversion} and taking $c_{n}=\frac{F_{n}}{F_{n+1}}$
we get that
\[
\KK_{1}^{n}\frac{b\left(i\right)}{a\left(i\right)}=\frac{F_{1}}{F_{0}}\left(\frac{1}{\sum_{k=0}^{n}\prod_{i=1}^{k}\left(\frac{F_{i-1}}{F_{i+1}}h_{1}\left(i\right)h_{2}\left(i\right)\right)}-1\right)=\frac{f\left(1\right)h_{2}\left(1\right)}{f\left(0\right)}\left(\frac{1}{\sum_{k=0}^{n}\frac{f\left(0\right)f\left(1\right)}{f\left(k\right)f\left(k+1\right)}\prod_{i=1}^{k}\left(\frac{h_{1}\left(i\right)}{h_{2}\left(i+1\right)}\right)}-1\right).
\]
\end{proof}
\setcounter{thm}{12}
\begin{rem}
We leave it as an exercise to show that solutions to $c\left(n\right)a\left(n\right)+c\left(n-1\right)c\left(n\right)b\left(n\right)=1$
where $a,b\in\CC\left[x\right]$, and $c\left(x\right)\in\CC\left(x\right)$
is a rational function, imply that $a,b$ must have the form as in
the theorem above where $f,h_{1},h_{2}$ are all polynomials in themselves
and $c\left(x\right)=\frac{f\left(x\right)}{f\left(x+1\right)h_{1}\left(x+1\right)}$.
Furthermore, in case that $a\left(x\right),b\left(x\right)$ is constant,
you should check that we get back the results from the previous section.\medskip{}
\end{rem}

While the theorem above is applicable to any functions $h_{1},h_{2}$
and $f$, it is probably most useful when they are polynomials over
$\ZZ$, in which case the product $\prod_{i=1}^{k}\left(\frac{h_{1}\left(i\right)}{h_{2}\left(i+1\right)}\right)$
can become much simpler.
\begin{example}[The trivial Euler family]
\label{exa:Euler-family} In \thmref{recurrence-roots}, consider
the case where $f\left(x\right)=1$:
\begin{align*}
b\left(x\right) & =-h_{1}\left(x\right)h_{2}\left(x\right)\\
a\left(x\right) & =h_{1}\left(x\right)+h_{2}\left(x+1\right).
\end{align*}
In this case we have 
\[
\KK_{1}^{n}\frac{b_{i}}{a_{i}}=h_{2}\left(1\right)\left(\frac{1}{\sum_{k=0}^{n}\prod_{i=1}^{k}\left(\frac{h_{1}\left(i\right)}{h_{2}\left(i+1\right)}\right)}-1\right),
\]
or alternatively
\[
\sum_{k=0}^{n}\prod_{i=1}^{k}\left(\frac{h_{1}\left(i\right)}{h_{2}\left(i+1\right)}\right)=\left(\frac{1}{h_{2}\left(1\right)}\KK_{1}^{n}\frac{b_{i}}{a_{i}}+1\right)^{-1}.
\]
\begin{enumerate}
\item Suppose that $h_{1}\left(i\right),h_{2}\left(i+1\right)\neq0$ for
$i\geq1$ and $\limfi i\frac{h_{1}\left(i\right)}{h_{2}\left(i\right)}>1$
(namely, $\deg\left(h_{1}\right)>\deg\left(h_{2}\right)$, or they
have the same degree, and the leading coefficient of $h_{1}\left(x\right)$
is larger than the leading coefficient of $h_{2}\left(x\right)$ in
absolute values). In this case, the infinite sum expression above
converge to infinity in absolute value, so that $\KK_{1}^{\infty}\frac{b_{i}}{a_{i}}=-h_{2}\left(1\right)$. 
\item Suppose that both $a\left(x\right)$ and $b\left(x\right)$ are constant.
As we have seen in \ref{subsec:Constant-continued-fraction}, it is
then easy to show that if $\KK_{1}^{\infty}\frac{b}{a}$ converges,
then it is a root of $z^{2}+az-b=0$. Let us show how to reprove this
result with \thmref{recurrence-roots}. Note that the roots $\lambda_{1},\lambda_{2}$
are exactly $-h_{1}\left(x\right),-h_{2}\left(x\right)$. As these
are interchangeable, we may assume that $\left|\lambda_{1}\right|\geq\left|\lambda_{2}\right|$.
If $\left|\lambda_{1}\right|>\left|\lambda_{2}\right|$ or $\lambda_{1}=\lambda_{2}$,
then just as in the previous case, we will get that $\KK_{1}^{\infty}\frac{b}{a}=-h_{2}\left(1\right)=\lambda_{2}$
as expected. If $\left|\lambda_{1}\right|=\left|\lambda_{2}\right|$
but $\lambda_{1}\neq\lambda_{2}$, then there is no convergence.
\end{enumerate}
In the following examples we denote $P_{k}:=\prod_{i=1}^{k}\left(\frac{h_{1}\left(i\right)}{h_{2}\left(i+1\right)}\right)$
so that $\KK_{1}^{n}\frac{b_{i}}{a_{i}}=h_{2}\left(1\right)\left(\left(\sum_{0}^{n}P_{k}\right)^{-1}-1\right)$.
\begin{enumerate}
\item [3.]\setcounter{enumi}{3}Let $h_{1}\left(x\right)=1$ and $h_{2}\left(x\right)=x$.
We then have that $P_{k}=\frac{1}{\left(k+1\right)!}$, so that
\[
\KK_{1}^{\infty}\frac{-n}{n+2}=\frac{1}{\sum_{k=0}^{\infty}\frac{1}{\left(k+1\right)!}}-1=\frac{1}{e-1}-1,
\]
which we already saw in \exaref{(The-exponential-function)}.
\item Let $h_{1}\left(x\right)=h_{2}\left(x\right)=x^{d}$ for some $d\geq2$.
Then $P_{k}=\prod_{i=1}^{k}\frac{i^{d}}{\left(i+1\right)^{d}}=\frac{1}{\left(k+1\right)^{d}}$,
so that
\[
\KK_{1}^{\infty}\frac{b_{n}}{a_{n}}=\frac{1}{\sum_{k=0}^{\infty}\frac{1}{\left(k+1\right)^{d}}}-1=\frac{1}{\zeta\left(d\right)}-1.
\]
\item Let $h_{1}\left(x\right)=x^{d-1}\left(x+2\right)$ and $h_{2}\left(x\right)=x^{d}$
for some $d\geq3$. We then have that 
\[
P_{k}=\prod_{i=1}^{k}\frac{i^{d-1}}{\left(i+1\right)^{d-1}}\cdot\prod_{i=1}^{k}\frac{i+2}{i+1}=\frac{1}{\left(k+1\right)^{d-1}}\cdot\frac{k+2}{2}=\frac{1}{2}\left[\frac{1}{\left(k+1\right)^{d-1}}+\frac{1}{\left(k+1\right)^{d-2}}\right].
\]
Summing up over $k$ gives us
\[
\sum_{k=0}^{\infty}\frac{1}{2}\left[\frac{1}{\left(k+1\right)^{d-1}}+\frac{1}{\left(k+1\right)^{d-2}}\right]=\frac{1}{2}\left(\zeta\left(d-1\right)+\zeta\left(d-2\right)\right).
\]
A similar computation can be done for $h_{1}\left(x\right)=x^{d-1}\left(x+m\right)$
and $h_{2}\left(x\right)=x^{d}$ where $m\geq0$. Note that for $m=0$
we get the continued fraction from part $\left(2\right)$ above and
for $m=1$ we can use the equivalence transformation \lemref{equivalence-transformation}
to cancel $\left(x+1\right)$ in $a\left(x\right)$ and $x\left(x+1\right)$
in $b\left(x\right)$ and get 
\[
\KK_{1}^{\infty}\frac{b\left(x\right)}{a\left(x\right)}=\KK_{1}^{\infty}\frac{-x^{2\left(d-1\right)}}{x^{\left(d-1\right)}+\left(1+x\right)^{\left(d-1\right)}}=\frac{1}{\zeta\left(d-1\right)}-1.
\]
\item Let $h_{1}\left(x\right)=x^{d}$ and $h_{2}\left(x\right)=x^{d-1}\left(x+1\right)$
for some $d\geq2$. We then have that 
\begin{align*}
P_{k} & =\prod_{i=1}^{k}\frac{i^{d-1}}{\left(i+1\right)^{d-1}}\cdot\prod_{i=1}^{k}\frac{i}{i+2}=\frac{1}{\left(k+1\right)^{d-1}}\cdot\frac{1\cdot2}{\left(k+1\right)\left(k+2\right)}=\frac{1}{\left(k+1\right)^{d}}\frac{2}{k+2}.
\end{align*}
Since $\frac{1}{j+1}\cdot\frac{1}{j+2}=\frac{1}{j+1}-\frac{1}{j+2}$
for all $j\geq0$, we get by induction that
\[
\frac{P_{k}}{2}=\frac{1}{\left(k+1\right)^{d}}\cdot\frac{1}{k+2}=\sum_{\ell=2}^{d}\frac{\left(-1\right)^{d-\ell}}{\left(k+1\right)^{\ell}}+\left(-1\right)^{d-1}\left(\frac{1}{k+1}-\frac{1}{k+2}\right).
\]
Summing over this expression we get
\begin{align*}
\sum_{k=0}^{\infty}\left[\sum_{\ell=2}^{d}\frac{\left(-1\right)^{d-\ell}}{\left(k+1\right)^{\ell}}+\left(-1\right)^{d-1}\left(\frac{1}{k+1}-\frac{1}{k+2}\right)\right] & =\sum_{\ell=2}^{d}\left(-1\right)^{d-\ell}\zeta\left(\ell\right)+\left(-1\right)^{d-1}.
\end{align*}
\end{enumerate}
\end{example}

\begin{rem}
In general, the ideas appearing in the examples above can help compute
many of the sums of the form $\sum_{k=0}^{\infty}\frac{f\left(0\right)f\left(1\right)}{f\left(k\right)f\left(k+1\right)}\prod_{i=1}^{k}\left(\frac{h_{1}\left(i\right)}{h_{2}\left(i+1\right)}\right)$
appearing in \thmref{recurrence-roots}.
\begin{itemize}
\item As the first example shows, we may assume that $\deg\left(h_{1}\right)\leq\deg\left(h_{2}\right)$.
\item Next, if $\deg\left(h_{1}\right)=\deg\left(h_{2}\right)$ and all
the roots of $h_{1},h_{2}$ are integers, then most of the elements
in $\prod_{i=1}^{k}\left(\frac{h_{1}\left(i\right)}{h_{2}\left(i+1\right)}\right)$
will be canceled out and $\frac{f\left(0\right)f\left(1\right)}{f\left(k\right)f\left(k+1\right)}\prod_{i=1}^{k}\left(\frac{h_{1}\left(i\right)}{h_{2}\left(i+1\right)}\right)$
will be a rational function. Using the standard decomposition of rational
functions, we can write it as a linear combination of the functions
of the form $\frac{1}{\left(n-\alpha\right)^{d}}$ with integer $\alpha$.
Their sum always converge if $d\geq2$ to values of the zeta function.
The elements for which $d=1$ should be put together to find out their
limits (e.g. $\sum_{0}^{\infty}\left(\frac{1}{k+1}-\frac{1}{k+2}\right)=1$).
This can be slightly generalized, if the roots of $h_{2}$ and $h_{1}$
are the same modulo the integers, in which case still most of the
elements in $\prod_{i=1}^{k}\left(\frac{h_{1}\left(i\right)}{h_{2}\left(i+1\right)}\right)$
are canceled.
\item Finally, if $\deg\left(h_{2}\right)>\deg\left(h_{1}\right)$, we expect
to see all sorts of factorials appearing in the denominators of the
summands, suggesting to look for Taylor expansions find get the limit.
\end{itemize}
\medskip{}
\end{rem}

In the examples above we only looked at ``trivial'' solutions where
$f\equiv1$. In general, there are solutions where $f\neq1$, however,
they are all part of the trivial family in disguise. Indeed, if
\begin{align*}
b\left(x\right) & =-h_{1}\left(x\right)h_{2}\left(x\right)\\
f\left(x\right)a\left(x\right) & =f\left(x-1\right)h_{1}\left(x\right)+f\left(x+1\right)h_{2}\left(x+1\right)
\end{align*}
as in the theorem, then using the equivalence transformation from
\lemref{equivalence-transformation} with $c_{n}=f\left(n\right)$,
we get that 
\[
\KK_{1}^{n}\frac{b\left(n\right)}{a\left(n\right)}=\frac{1}{f\left(0\right)}\KK_{1}^{n}\frac{\tilde{b}\left(n\right)}{\tilde{a}\left(n\right)}.
\]
where this new continued fraction is in the trivial Euler family:
\begin{align*}
\tilde{h}_{1}\left(x\right) & =h_{1}\left(x\right)f\left(x-1\right)\qquad;\qquad\tilde{h}_{2}\left(x\right)=h_{2}\left(x\right)f\left(x\right)\\
\tilde{b}\left(x\right) & =f\left(x-1\right)f\left(x\right)b\left(x\right)=-\tilde{h}_{1}\left(x\right)\times\tilde{h}_{2}\left(x\right)\\
\tilde{a}\left(x\right) & =f\left(x\right)a\left(x\right)=\tilde{h}_{1}\left(x\right)+\tilde{h}_{2}\left(x+1\right).
\end{align*}

\begin{rem}
It is also interesting to note that the same polynomial continued
fraction expansion $\KK_{1}^{\infty}\frac{b\left(i\right)}{a\left(i\right)}$
can have several presentations as an Euler continued fraction. For
example, given any two polynomials $g_{1}\left(x\right),g_{2}\left(x\right)$
and some constant $k$, suppose that we have
\[
b\left(x\right)=-g_{1}\left(x\right)g_{2}\left(x\right)\left(x+k\right)\left(x+k+1\right).
\]
On the one hand, we can decompose $-b\left(x\right)$ to 
\[
h_{1}\left(x\right)=g_{1}\left(x\right)\left(x+k\right)\;,\;h_{2}\left(x\right)=g_{2}\left(x\right)\left(x+k+1\right)
\]
and taking $f\left(x\right)\equiv1$ we obtain that 
\[
a\left(x\right)=h_{1}\left(x\right)+h_{2}\left(x+1\right).
\]
On the other hand, we can take the decomposition
\[
\tilde{h}_{1}\left(x\right)=g_{1}\left(x\right)\left(x+k+1\right)\;,\;\tilde{h}_{2}\left(x\right)=g_{2}\left(x\right)\left(x+k\right),
\]
and taking $f\left(x\right)=x+k+1$ we get the corresponding $\tilde{a}\left(x\right)$
as
\begin{align*}
\tilde{a}\left(x\right) & =\frac{\tilde{h}_{1}\left(x\right)f\left(x-1\right)+\tilde{h}_{2}\left(x+1\right)f\left(x+1\right)}{f\left(x\right)}\\
 & =\frac{\left[h_{1}\left(x\right)\frac{x+k+1}{x+k}\right]\cdot\left(x+k\right)+\left[h_{2}\left(x+1\right)\frac{x+k+1}{x+k+2}\right]\cdot\left(x+k+2\right)}{x+k+1}\\
 & =h_{1}\left(x\right)+h_{2}\left(x+1\right)=a\left(x\right).
\end{align*}
\end{rem}

\medskip{}

As mentioned before, the structure of the trivial family should not
be too surprising. Indeed, in a sense we are trying to find solutions
$\lambda_{1}\left(x\right),\lambda_{2}\left(x\right)$ to $S\left(t\right)=t^{2}+a\left(x\right)t+b\left(x\right)=0$,
as we have done in the constant case in \ref{subsec:Constant-continued-fraction}.
However, as in our continued fraction expansion we add $1$ to $x$
at each step, we instead look for solutions where we ``half advance''
the index $a\left(x\right)=\lambda_{1}\left(x\right)+\lambda_{2}\left(x+1\right)$.

In general, starting with a standard quadratic equation $x^{2}+ax+b=0$
over the integers, we do not expect to have solutions over the integers.
The same applies here - if we are given polynomials $a\left(x\right),b\left(x\right)$
where $\KK_{1}^{\infty}\frac{b\left(n\right)}{a\left(n\right)}$ converges,
it is not true that there is a solution as in \thmref{recurrence-roots}
with integral polynomials. Assuming that we know how to decompose
$b\left(x\right)$, we can go over all its decompositions $b\left(x\right)=-h_{1}\left(x\right)h_{2}\left(x\right)$
and see if $a\left(x\right)=h_{1}\left(x\right)+h_{2}\left(x+1\right)$.
In the more generalized form, where $f\left(x\right)a\left(x\right)=f\left(x-1\right)h_{1}\left(x\right)+f\left(x+1\right)h_{2}\left(x+1\right)$,
this process is less trivial, but still not that hard. As this algorithm
is much more technical in nature we describe it in \appref{Identifying-polynomial-continued},
and we also have an actual python code to find such solutions in \cite{ramanujan_machine_research_group_ramanujan_2023}.

Finally we investigate a little further into the special case where
$\deg\left(h_{1}\right),\deg\left(h_{2}\right)\leq1$ in \appref{Beta-and-Gamma}.
It is well known that polynomial continued fraction can represent
values of hypergeometric functions (see for example section 6 in \cite{jones_continued_1980}),
and are closely related to many interesting integrals. In \appref{Beta-and-Gamma}
we go over some of the details for this degree 1 case, and in particular
see how the Beta and Gamma functions appear naturally in this area.
\begin{problem}
One of the interesting questions raised once we have the presentation
of polynomial continued fraction in the Euler family, is can we use
this presentation to ``collect'' many polynomial continued fractions
into one family. For example, fixing the numerator polynomial $b\left(x\right)$,
are there infinitely many denominators $a_{k}\left(x\right),\;k\in\NN$
and corresponding $f_{k}\left(x\right)$ for which $\KK_{1}^{\infty}\frac{b\left(i\right)}{a_{k}\left(i\right)}$
is in the Euler family? And if so, is there a deeper connection between
those continued fractions? It appears that in many cases this is true,
and we study this idea in a second paper where we define a structure
call \textbf{the conservative matrix field} which combines many polynomial
continued fractions into one interesting structure. In particular,
this structure can be used to reprove Ap\'ery's result that $\zeta\left(3\right)$
is irrational \cite{apery_irrationalite_1979,van_der_poorten_proof_1979}.
\end{problem}

\newpage{}

\section{\label{sec:The-most-general}The most general of generalized continued
fractions}

One of the main steps in understanding the Euler continued fractions
was using the equivalence transformation in \lemref{equivalence-transformation}.
This raises the broader question of whether there is a more general
``equivalence'' relation between continued fractions which can be
useful in our investigation. This step had a natural interpretation
using matrices as well, suggesting that maybe the right framework
is to simply study polynomial matrices, which is what we do in this
section.
\begin{defn}
Let $M\left(i\right)\in\GL_{2}\left(\CC\right)$ be a sequence of
$2\times2$ matrices. For $z\in\CC$ we will denote 
\[
\left[\prod_{1}^{\infty}M\left(i\right)\right]\left(z\right)=\limfi n\left[\prod_{1}^{n}M\left(i\right)\right]\left(z\right).
\]
\end{defn}

In the definition above, it is possible that the limit converges for
one $z$ and diverges for another. For example, if we take $M\left(i\right)=\left(\begin{smallmatrix}-1 & 0\\
0 & 1
\end{smallmatrix}\right)$, then $\left[\prod_{1}^{\infty}M\left(i\right)\right]\left(0\right)=0$
while $\left[\prod_{1}^{n}M\left(i\right)\right]\left(1\right)=\left(-1\right)^{n}$
doesn't converge. However, in many ``natural'' sequences we have
a very strong convergence behavior.
\begin{example}
\begin{enumerate}
\item If $M_{i}=\left(\begin{smallmatrix}1 & a_{i}\\
0 & 1
\end{smallmatrix}\right)$ are upper triangular, then 
\[
\left(\prod_{1}^{n}M_{i}\right)\left(z\right)=\left(\begin{smallmatrix}1 & \sum_{1}^{n}a_{i}\\
0 & 1
\end{smallmatrix}\right)\left(z\right)=z+\sum_{1}^{n}a_{i}.
\]
If $\sum_{1}^{\infty}a_{i}=\infty$, then $\left(\prod_{1}^{n}M_{i}\right)\left(z\right)\to\infty$
for all $z$. If we also add an $M_{0}$ matrix which takes $\infty\to w\in\CC$,
then $\left(\prod_{0}^{n}M_{i}\right)\left(z\right)\to w$ for all
$z$.
\item If $M_{i}=\left(\begin{smallmatrix}0 & b_{i}\\
1 & a_{i}
\end{smallmatrix}\right)$ has the continued fraction form, then $\left(\prod_{1}^{n}M_{i}\right)\left(\infty\right)=\left(\prod_{1}^{n-1}M_{i}\right)\left(0\right)$
since $M_{i}\left(\infty\right)=0$. It follows that we have convergence
with the same limit at $0$ if and only if we have convergence at
$\infty$. Writing $\prod_{1}^{n-1}M_{i}=\left(\begin{smallmatrix}p_{n-1} & p_{n}\\
q_{n-1} & q_{n}
\end{smallmatrix}\right)$ , this limit will simply be $\limfi n\frac{p_{n}}{q_{n}}$. \\
Considering convergence for general starting point $x\in\RR$ we note
that
\[
\left|\left[\prod_{1}^{n-1}M_{i}\right]\left(x\right)-\left[\prod_{1}^{n-1}M_{i}\right]\left(0\right)\right|=\left|\frac{p_{n-1}x+p_{n}}{q_{n-1}x+q_{n}}-\frac{p_{n}}{q_{n}}\right|=\left|\frac{p_{n-1}}{q_{n-1}}-\frac{p_{n}}{q_{n}}\right|\left|\frac{x}{\left(x+\frac{q_{n}}{q_{n-1}}\right)}\right|.
\]
Since $\left|\frac{p_{n-1}}{q_{n-1}}-\frac{p_{n}}{q_{n}}\right|\to0$,
as long as $\left|\frac{x}{\left(x+\frac{q_{n}}{q_{n-1}}\right)}\right|$
is bounded, we will get that 
\[
\limfi n\left[\prod_{1}^{n-1}M_{i}\right]\left(x\right)=\limfi n\left[\prod_{1}^{n-1}M_{i}\right]\left(0\right)=\KK_{1}^{\infty}\frac{b_{i}}{a_{i}}.
\]
When $b_{i},a_{i}$ are constant, as in \subsecref{Constant-continued-fraction},
we expect $q_{n}\sim\lambda^{n}$ to grow exponentially, so that there
is convergence for any $x\neq-\lambda$. When $a_{n},b_{n}$ are non
constant polynomials, then usually $q_{n}$ grows at least factorially,
so that $\left|\frac{q_{n}}{q_{n-1}}\right|\to\infty$, and then $\left|\frac{x}{\left(x+\frac{q_{n}}{q_{n-1}}\right)}\right|$
is bounded as well. Moreover, this convergence is going to be uniform
in $x$ for $x$ in a bounded region (or more generally when $\left|1+\frac{q_{n}}{q_{n-1}x}\right|\geq C>0$
is bounded from below).
\end{enumerate}
\end{example}

\newpage{}

\subsection{\label{subsec:coboundary}Some words about cocycles and coboundaries}
\begin{flushleft}
Next, we would like to generalize the ``equivalence'' from \lemref{equivalence-transformation}.

In our previous discussion about continued fractions, where $M_{i}=\left(\begin{smallmatrix}0 & b_{i}\\
1 & a_{i}
\end{smallmatrix}\right)$, we were specially interested in $P_{n}:=\prod_{1}^{n}M_{i}=\left(\begin{smallmatrix}p_{n} & p_{n+1}\\
q_{n} & q_{n+1}
\end{smallmatrix}\right)$ which contained the numerators and denominators of the convergents.
These products can be defined for any sequence $M_{i}$ of matrices,
and we think of them as \textbf{potential matrices} where we move
from the potential at point $i$ to the potential at point $i+1$
via the matrix $M_{i}$, or more formally $P_{i}M_{i}=P_{i+1}$. We
can also restrict them to row vectors, instead of full $2\times2$
matrices. In particular, for the continued fraction matrices, each
such vector sequence will satisfy $\left(F_{i-1},F_{i}\right)M_{i}=\left(F_{i},F_{i+1}\right)$,
where $F_{i}$ solves the recurrence relation 
\[
F_{i+1}=F_{i}a_{i}+F_{i-1}b_{i}
\]
that we already encountered before in \subsecref{Euler's-formula}.

Attempting to move from the potential of one sequence to another is
quite natural, and this type of question is usually asked in cohomology
theory (see for example chapter 4 in \cite{brown_cohomology_2012}),
where such sequences $M_{i}$ of matrices can and should be called
``\textbf{cocycles}''. We will not go too much into this theory
here, since on the one hand this cocycle structure is in a sense trivial
here, and on the other hand, the theory is usually much more geared
into commutative rings, unlike our noncommutative matrix setting.
However, the question about natural conversion between the potentials
exists in this theory and is called \textbf{coboundary equivalence},
and this will come up a lot in our study. More specifically we want
to move from one potential $P_{i}^{\left(1\right)}$ to the second
$P_{i}^{\left(2\right)}$ using a nice transformation $P_{i}^{\left(1\right)}U_{i}=P_{i}^{\left(2\right)}$,
producing for us this commutative diagram:
\[
\xymatrix{P_{1}^{\left(1\right)}\ar[r]^{M_{1}^{\left(1\right)}}\ar[d]^{U_{1}} & P_{2}^{\left(1\right)}\ar[r]^{M_{2}^{\left(1\right)}}\ar[d]^{U_{2}} & P_{3}^{\left(1\right)}\ar[r]^{M_{3}^{\left(1\right)}}\ar[d]^{U_{3}} & \cdots\ar[r]^{M_{n-1}^{\left(1\right)}} & P_{n}^{\left(1\right)}\ar[d]^{U_{n}}\\
P_{1}^{\left(2\right)}\ar[r]^{M_{1}^{\left(2\right)}} & P_{2}^{\left(2\right)}\ar[r]^{M_{2}^{\left(2\right)}} & P_{3}^{\left(2\right)}\ar[r]^{M_{3}^{\left(2\right)}} & \cdots\ar[r]^{M_{n-1}^{\left(2\right)}} & P_{n}^{\left(2\right)}
}
\]
More formally, we have the following.
\begin{defn}
Two matrix sequences $M_{i}^{\left(1\right)},\;M_{i}^{\left(2\right)}$
are called \textbf{$U_{i}$-coboundary equivalent} for a sequence
of invertible matrices $U_{i}$ if $M_{i}^{\left(1\right)}U_{i+1}=U_{i}M_{i}^{\left(2\right)}$
for all $i$, or in a commutative diagram form:
\[
\xymatrix{\left(*\right)\ar[r]^{M_{i}^{\left(2\right)}} & \left(*\right)\\
\left(*\right)\ar[r]_{M_{i}^{\left(1\right)}}\ar[u]^{U_{i}} & \left(*\right)\ar[u]_{U_{i+1}}
}
.
\]
\end{defn}

Note that in general we work with Mobius transformation, so that $M_{i}^{\left(1\right)}U_{i+1}=U_{i}M_{i}^{\left(2\right)}$
should be true up to a scalar matrix multiplication.
\begin{rem}
In the world of standard, non indexed matrices, this coboundary equivalence
is simply matrix conjugation, and as we shall see some of the results
for this coboundary equivalence are just ``indexed'' versions of
what we expect from matrix conjugacy.
\end{rem}

The commutativity condition in the coboundary definition can be extended
to products of the $M_{i}$, and in particular for our potential matrices.
Indeed, a simple induction shows that with the notations as in the
definition, for all $m\leq n$ we have
\[
U_{m}\left[\prod_{m}^{n}M_{i}^{\left(2\right)}\right]=\left[\prod_{m}^{n}M_{i}^{\left(1\right)}\right]U_{n+1}.
\]

Every two (invertible) matrix sequences $M_{i}^{\left(1\right)},\;M_{i}^{\left(2\right)}$
are coboundary equivalent for some $U_{i}$. Indeed, once we choose
$U_{1}$, we can recursively define $U_{i+1}=\left(M_{i}^{\left(2\right)}\right)^{-1}U_{i}M_{i}^{\left(1\right)}$.
However, what will matter to us later on is that $U_{i}$ is simple
enough to work with. For example, it can be defined over $\ZZ$, triangular,
diagonal, etc. In particular, we want to work with the Mobius maps
induced by the matrices, and the upper triangular (resp. lower triangular)
are exactly the matrices which take infinity to itself (resp. zero
to itself).

This idea of coboundary equivalent sequences is very useful, and the
``equivalence transformation'' from \lemref{equivalence-transformation}
was just one example: 
\[
M_{n}^{\left(1\right)}=\left(\begin{smallmatrix}0 & b_{n}\\
1 & a_{n}
\end{smallmatrix}\right),\quad M_{n}^{\left(2\right)}=\left(\begin{smallmatrix}0 & c_{n-1}c_{n}b_{n}\\
1 & c_{n}a_{n}
\end{smallmatrix}\right),\quad U_{n}=\left(\begin{smallmatrix}1 & 0\\
0 & c_{n-1}
\end{smallmatrix}\right)
\]
so that $M_{n}^{\left(1\right)}U_{n+1}=c_{n-1}U_{n}M_{n}^{\left(2\right)}$.
Since we deal with Mobius transformation, where scalar matrices act
as the identity, we have $M_{n}^{\left(1\right)}U_{n+1}\equiv U_{n}M_{n}^{\left(2\right)}$.

\medskip{}

Other than this important example, we have two more - one to move
to upper triangular matrices, and one to continued fraction matrices,
both of which are helpful when we need to take product of many such
matrices. In the upper triangular case, the diagonal is just a product
of the diagonals and the only complicated part is in the upper right
corner. In the continued fraction form, as we already saw, there is
a natural recursion relation, which will be very helpful once we start
to do the actual computations, and we will start with it. This conversion
was first stated in \cite{razon_automated_2022}.
\begin{thm}[\textbf{\textit{Conversion to continued fraction}}]
\label{thm:to-gcf} Let 
\[
M_{n}=\left(\begin{smallmatrix}a_{n} & b_{n}\\
c_{n} & d_{n}
\end{smallmatrix}\right),\quad U_{n}=\left(\begin{smallmatrix}1 & a_{n}\\
0 & c_{n}
\end{smallmatrix}\right),\quad a_{n},b_{n},c_{n},d_{n}\in\CC,
\]
such that $c_{n}\neq0$ for all $n\geq1$ (so that $U_{n}$ is invertible).
Then $M_{n}$ is $U_{n}$-coboundary equivalent to the continued fraction
matrix
\[
U_{n}^{-1}M_{n}U_{n+1}=\left(\begin{smallmatrix}0 & -\frac{c_{n+1}}{c_{n}}\det\left(M_{n}\right)\\
1 & a_{n+1}+d_{n}\frac{c_{n+1}}{c_{n}}
\end{smallmatrix}\right).
\]
In particular, setting $\left(\begin{smallmatrix}p_{n}\\
q_{n}
\end{smallmatrix}\right)=M_{1}M_{2}\cdots M_{n}\left(\begin{smallmatrix}1\\
0
\end{smallmatrix}\right)$, both of the $p_{n}$ and $q_{n}$ satisfy the same recurrence relation
\[
\left(\begin{smallmatrix}p_{n} & p_{n+1}\\
q_{n} & q_{n+1}
\end{smallmatrix}\right)=\left(\begin{smallmatrix}p_{n-1} & p_{n}\\
q_{n-1} & q_{n}
\end{smallmatrix}\right)\left(U_{n}^{-1}M_{n}U_{n+1}\right)\quad;\quad\left(\begin{smallmatrix}p_{0} & p_{1}\\
q_{0} & q_{1}
\end{smallmatrix}\right)=\left(\begin{smallmatrix}1 & a_{1}\\
0 & c_{1}
\end{smallmatrix}\right).
\]
\end{thm}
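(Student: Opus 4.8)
The plan is to prove the two assertions separately: the displayed formula for $U_n^{-1}M_nU_{n+1}$ by a direct two-step matrix multiplication, and then the recursion by combining the one-step coboundary relation $M_nU_{n+1}=U_n(U_n^{-1}M_nU_{n+1})$ with the observation that the left and right columns of $(M_1\cdots M_n)U_{n+1}$ are consecutive convergent vectors. \lemref{gcf-recursion} (applied to the resulting continued fraction matrices) then gives the common scalar recurrence for free.

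First I would establish the coboundary identity by multiplying out. Since $c_n\neq0$, we have $U_n^{-1}=\frac{1}{c_n}\left(\begin{smallmatrix}c_n & -a_n\\ 0 & 1\end{smallmatrix}\right)$, so $U_n^{-1}M_n$ has $(1,1)$ entry $a_n-a_n=0$, $(2,1)$ entry $1$, $(2,2)$ entry $d_n/c_n$, and $(1,2)$ entry $(b_nc_n-a_nd_n)/c_n=-\det(M_n)/c_n$. Right-multiplying by $U_{n+1}=\left(\begin{smallmatrix}1 & a_{n+1}\\ 0 & c_{n+1}\end{smallmatrix}\right)$ leaves the first column unchanged and replaces the second column by $a_{n+1}$ times the first plus $c_{n+1}$ times the second, which yields exactly $\left(\begin{smallmatrix}0 & -\frac{c_{n+1}}{c_n}\det(M_n)\\ 1 & a_{n+1}+d_n\frac{c_{n+1}}{c_n}\end{smallmatrix}\right)$. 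In particular $\tilde M_n:=U_n^{-1}M_nU_{n+1}$ is a continued fraction matrix, and $M_nU_{n+1}=U_n\tilde M_n$ is a tautology once $\tilde M_n$ is defined this way, so $M_n$ and $\tilde M_n$ are $U_n$-coboundary equivalent.

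For the recurrence, set $P_n:=M_1\cdots M_n$ (with $P_0=I$) and $R_n:=P_nU_{n+1}$. The relation $M_nU_{n+1}=U_n\tilde M_n$ gives $R_n=P_{n-1}M_nU_{n+1}=P_{n-1}U_n\tilde M_n=R_{n-1}\tilde M_n$, with $R_0=P_0U_1=U_1=\left(\begin{smallmatrix}1 & a_1\\ 0 & c_1\end{smallmatrix}\right)$. It remains to identify the columns of $R_n$. Its left column is $P_n\left(\begin{smallmatrix}1\\ 0\end{smallmatrix}\right)=\left(\begin{smallmatrix}p_n\\ q_n\end{smallmatrix}\right)$ by definition. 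Its right column is $P_n\left(\begin{smallmatrix}a_{n+1}\\ c_{n+1}\end{smallmatrix}\right)$; since $\left(\begin{smallmatrix}a_{n+1}\\ c_{n+1}\end{smallmatrix}\right)$ is the first column of $M_{n+1}$, this equals $P_nM_{n+1}\left(\begin{smallmatrix}1\\ 0\end{smallmatrix}\right)=P_{n+1}\left(\begin{smallmatrix}1\\ 0\end{smallmatrix}\right)=\left(\begin{smallmatrix}p_{n+1}\\ q_{n+1}\end{smallmatrix}\right)$. Hence $R_n=\left(\begin{smallmatrix}p_n & p_{n+1}\\ q_n & q_{n+1}\end{smallmatrix}\right)$, so $R_n=R_{n-1}\tilde M_n$ with $R_0=U_1$ is precisely the asserted matrix recursion; reading off the columns (equivalently, applying \lemref{gcf-recursion} to the $\tilde M_n$) shows that $p_n$ and $q_n$ satisfy the common scalar recurrence $x_{n+1}=\tilde a_nx_n+\tilde b_nx_{n-1}$ with $\tilde a_n,\tilde b_n$ the entries just computed.

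I do not expect a genuine obstacle: the computation is routine. The one thing requiring care is the index bookkeeping, namely the shift between $M_n$ and $U_{n+1}$ — this is exactly what makes the right column of $R_n$ coincide with the left column of $R_{n+1}$, and it is the reason the specific choice $U_n=\left(\begin{smallmatrix}1 & a_n\\ 0 & c_n\end{smallmatrix}\right)$ (whose second column is the first column of $M_n$) is the right one. I would also remark, as the paper already does, that when the matrices are read as Mobius maps the identities are only needed up to scalars, but the identities above in fact hold verbatim as matrix identities, the $c_{n+1}/c_n$ factors being genuine.
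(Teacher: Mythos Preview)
Your proof is correct and follows essentially the same route as the paper: both verify the formula for $U_n^{-1}M_nU_{n+1}$ by direct multiplication, then identify the matrix $P_nU_{n+1}=U_1\prod_{k=1}^n\tilde M_k$ with $\left(\begin{smallmatrix}p_n & p_{n+1}\\ q_n & q_{n+1}\end{smallmatrix}\right)$ by reading off its columns. The only cosmetic difference is that the paper phrases the right-column identification via $\tilde M_{n+1}\left(\begin{smallmatrix}1\\0\end{smallmatrix}\right)=\left(\begin{smallmatrix}0\\1\end{smallmatrix}\right)$, while you phrase it via ``the second column of $U_{n+1}$ is the first column of $M_{n+1}$''; these are the same observation.
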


\begin{proof}
The computation of $\tilde{M}_{n}:=U_{n}^{-1}M_{n}U_{n+1}=\left(\begin{smallmatrix}0 & -\frac{c_{n+1}}{c_{n}}\det\left(M_{n}\right)\\
1 & a_{n+1}+d_{n}\frac{c_{n+1}}{c_{n}}
\end{smallmatrix}\right)$ is straight forward. Noting that $U_{n+1}\left(\begin{smallmatrix}1\\
0
\end{smallmatrix}\right)=\left(\begin{smallmatrix}1\\
0
\end{smallmatrix}\right)$, the coboundary equivalence gives us
\begin{align*}
\left(\begin{smallmatrix}p_{n}\\
q_{n}
\end{smallmatrix}\right) & =\left(\prod_{1}^{n}M_{k}\right)U_{n+1}\left(\begin{smallmatrix}1\\
0
\end{smallmatrix}\right)=U_{1}\left(\prod_{1}^{n}\tilde{M}_{k}\right)\left(\begin{smallmatrix}1\\
0
\end{smallmatrix}\right).
\end{align*}
Since $\tilde{M}_{n+1}\left(\begin{smallmatrix}1\\
0
\end{smallmatrix}\right)=\left(\begin{smallmatrix}0\\
1
\end{smallmatrix}\right)$, we also get that 
\[
\left[U_{1}\prod_{1}^{n}\tilde{M}_{k}\right]\left(\begin{smallmatrix}0\\
1
\end{smallmatrix}\right)=\left[U_{1}\prod_{1}^{n}\tilde{M}_{k}\right]\tilde{M}_{n+1}\left(\begin{smallmatrix}1\\
0
\end{smallmatrix}\right)=\left(\begin{smallmatrix}p_{n+1}\\
q_{n+1}
\end{smallmatrix}\right).
\]
In other words, we got that 
\[
U_{1}\prod_{1}^{n}\tilde{M}_{k}=\left(\begin{smallmatrix}p_{n} & p_{n+1}\\
q_{n} & q_{n+1}
\end{smallmatrix}\right).
\]
This implies the matrix recurrence relation 
\[
\left(\begin{smallmatrix}p_{n-1} & p_{n}\\
q_{n-1} & q_{n}
\end{smallmatrix}\right)\tilde{M}_{n}=\left(\begin{smallmatrix}p_{n} & p_{n+1}\\
q_{n} & q_{n+1}
\end{smallmatrix}\right)\quad;\quad\left(\begin{smallmatrix}p_{0} & p_{1}\\
q_{0} & q_{1}
\end{smallmatrix}\right)=U_{1}=\left(\begin{smallmatrix}1 & a_{1}\\
0 & c_{1}
\end{smallmatrix}\right).
\]
\end{proof}
\begin{rem}
In the theorem above, if the entries of $M_{n}$ are integers with
nonconstant $c_{n}$, then in general the coefficients $U_{n}^{-1}M_{n}U_{n+1}$
in the recurrence are nonintegers. However, the $p_{n},q_{n}$ solutions
will still be integers, since we used the product of the integral
$M_{i}$ matrices to define them. If we only care about the ratio
$\frac{p_{n}}{q_{n}}$, then we may use the original equivalence transformation
from \lemref{equivalence-transformation} to move to the integral
coefficients
\[
\left(\begin{smallmatrix}c_{n-1} & 0\\
0 & 1
\end{smallmatrix}\right)\left(\begin{smallmatrix}0 & -\frac{c_{n+1}}{c_{n}}\det\left(M_{n}\right)\\
1 & a_{n+1}+d_{n}\frac{c_{n+1}}{c_{n}}
\end{smallmatrix}\right)\left(\begin{smallmatrix}1 & 0\\
0 & c_{n}
\end{smallmatrix}\right)=\left(\begin{smallmatrix}0 & -c_{n-1}c_{n+1}\det\left(M_{n}\right)\\
1 & c_{n}a_{n+1}+d_{n}c_{n+1}
\end{smallmatrix}\right).
\]
\end{rem}

We are mainly interested in the case where the matrices' entries are
polynomial evaluated at the integer points, so for example in the
previous case $c_{n}=c\left(n\right)$ where $c\in\CC\left[x\right]$.
In particular, unless $c\equiv0$, in which case the $M_{n}$ are
upper triangular, we can always apply this transformation for all
$n$ large enough. Moreover, if $M_{n}$ is a polynomial matrix with
polynomials over $\ZZ$, and $c_{n}$ are constant, then its polynomial
continued fraction form above is also over $\ZZ$, without using the
equivalence transformation.

To summerize, the theorem above shows that almost any polynomial matrix
is ``nicely'' coboundary equivalent to a polynomial continued fraction,
and if it is not, then it is upper triangular, which is usually easier
to work with, since we can use equation like $\prod_{1}^{n}\left(\begin{smallmatrix}1 & \alpha_{i}\\
0 & 1
\end{smallmatrix}\right)=\left(\begin{smallmatrix}1 & \sum_{1}^{n}\alpha_{i}\\
0 & 1
\end{smallmatrix}\right)$.
\par\end{flushleft}

\subsection{Upper triangular matrices}

Our next goal is to show when we can transform a sequence of matrices
into upper triangular. Recall that a standard $2\times2$ matrix is
conjugated to an upper triangular matrix if and only if it has a nonzero
eigenvector $v^{tr}M=\lambda v^{tr}$. Here we also have the indexed
analogue, which while at first glance seems a bit trivial, when we
add the right restrictions, will become quite helpful.
\begin{defn}
Let $M_{i}=\left(\begin{smallmatrix}a_{i} & b_{i}\\
c_{i} & d_{i}
\end{smallmatrix}\right)$ be a sequence of matrices. We say that a sequence $v\left(i\right)=\left(F_{i},G_{i}\right)$
of nonzero vectors is a \textbf{left eigenvector} with eigenvalue
$\lambda\left(i\right)$ if 
\[
v\left(i\right)M_{i}=\lambda\left(i\right)v\left(i+1\right).
\]

We similarly define \textbf{right eigenvector} and right eigenvalue
by the formula
\[
M_{i}u\left(i+1\right)=\alpha\left(i\right)u\left(i\right).
\]
\end{defn}

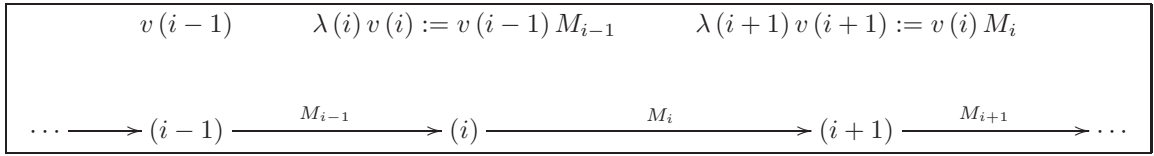
\begin{figure}[H]
\begin{centering}
\begin{tabular}{|c|}
\hline 
$\xymatrix{ & v\left(i-1\right) & \lambda\left(i\right)v\left(i\right):=v\left(i-1\right)M_{i-1} & \lambda\left(i+1\right)v\left(i+1\right):=v\left(i\right)M_{i}\\
\cdots\ar[r] & \left(i-1\right)\ar[r]^{M_{i-1}} & \left(i\right)\ar[r]^{M_{i}} & \left(i+1\right)\ar[r]^{M_{i+1}} & \cdots
}
$\tabularnewline
\hline 
\end{tabular}
\par\end{centering}
\caption{We should think of left eigenvectors $v\left(i\right)$ as being at
the $i$-th position, and multiplying by $M_{i}$ from the right \textquotedblleft moves\textquotedblright{}
them to the $i+1$ position. Right eigenvector goes similarly but
from $i+1$ to $i$ with $M_{i}$ multiplying from the left.}
\end{figure}

Unlike standard eigenvectors, in our case it is very easy to find
eigenvectors by simply defining $v\left(i+1\right)=\frac{1}{\lambda\left(i\right)}v\left(i\right)M\left(i\right)$
recursively. However, the problem is finding an eigenvector which
is easy to work with. We already saw one such example when our $M_{i}$
had continued fraction form, in which case a 1-left eigenvector is
simply a solution to 
\[
\left(F_{i-1},F_{i}\right)\left(\begin{smallmatrix}0 & b_{i}\\
1 & a_{i}
\end{smallmatrix}\right)=\left(F_{i},F_{i+1}\right),
\]
or alternatively $F_{i+1}=a_{i}F_{i}+b_{i}F_{i-1}$. If $F_{i}$ is
just any sequence, then it would be very hard to work with, however
if $b_{i},a_{i}$ are polynomial in $i$, we might find $F_{i}$ which
is polynomial or exponential in $i$.
\begin{example}
\label{exa:euler-left-right-eigenvectors}Consider a generalized continued
fraction from the trivial Euler family $M_{i}=\left(\begin{smallmatrix}0 & -h_{1}\left(i\right)h_{2}\left(i\right)\\
1 & h_{1}\left(i\right)+h_{2}\left(i+1\right)
\end{smallmatrix}\right)$ (see \subsecref{Euler's-formula}). Then it has both a $h_{2}\left(i\right)$-left
and $h_{1}\left(i\right)$-right eigenvectors
\begin{align*}
\left(1,\;h_{2}\left(i\right)\right)\cdot\left(\begin{smallmatrix}0 & -h_{1}\left(i\right)h_{2}\left(i\right)\\
1 & h_{1}\left(i\right)+h_{2}\left(i+1\right)
\end{smallmatrix}\right) & =h_{2}\left(i\right)\cdot\left(1,\;h_{2}\left(i+1\right)\right)\\
\left(\begin{smallmatrix}0 & -h_{1}\left(i\right)h_{2}\left(i\right)\\
1 & h_{1}\left(i\right)+h_{2}\left(i+1\right)
\end{smallmatrix}\right)\cdot\left(\begin{smallmatrix}h_{2}\left(i+1\right)\\
-1
\end{smallmatrix}\right) & =h_{1}\left(i\right)\cdot\left(\begin{smallmatrix}h_{2}\left(i\right)\\
-1
\end{smallmatrix}\right).
\end{align*}
In particular if $h_{1},h_{2}\in\ZZ\left[x\right]$, then both eigenvalues
and eigenvectors are integral.
\end{example}

\medskip{}

Note that in the example above, the left and right eigenvectors at
the $i$-position are perpendicular:
\[
\left(1,\;h_{2}\left(i\right)\right)\cdot\left(\begin{smallmatrix}h_{2}\left(i\right)\\
-1
\end{smallmatrix}\right)=0.
\]
This is not a coincidence, and it happens for any sequence of matrices,
and we can use it to triangularize it.
\begin{lem}
\label{lem:triangularization}Let $M_{i}=\left(\begin{smallmatrix}a_{i} & b_{i}\\
c_{i} & d_{i}
\end{smallmatrix}\right)$ be any sequence of matrices. Then $\left(G_{i},F_{i}\right)$ is
a left $\lambda_{i}$-eigenvector if and only if $\left(\begin{smallmatrix}F_{i}\\
-G_{i}
\end{smallmatrix}\right)$ is a right $\alpha_{i}$-eigenvector. Moreover, if the $F_{i}\neq0$,
then setting $U_{n}=\left(\begin{smallmatrix}F_{i}^{-1} & 0\\
G_{i} & F_{i}
\end{smallmatrix}\right)$ we get that 
\[
U_{i}M_{i}U_{i+1}^{-1}=\left(\begin{smallmatrix}\alpha_{i} & \frac{b_{i}}{F_{i}F_{i+1}}\\
0 & \lambda_{i}
\end{smallmatrix}\right),
\]
so in particular $\alpha_{i}\lambda_{i}=\det\left(M_{i}\right)$.
\end{lem}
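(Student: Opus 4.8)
The plan is to prove the eigenvector equivalence first and then the triangularization, relying throughout on the single observation that the rows of $U_i$ and the columns of $U_{i+1}^{-1}$ are exactly the relevant eigenvectors, so that the matrix identity is mostly bookkeeping. For the first assertion I would unpack $\left(G_i,F_i\right)M_i=\lambda_i\left(G_{i+1},F_{i+1}\right)$ into the two scalar equations $G_ia_i+F_ic_i=\lambda_iG_{i+1}$ and $G_ib_i+F_id_i=\lambda_iF_{i+1}$. Eliminating $G_i$ (take $b_i$ times the first minus $a_i$ times the second) produces $F_i\det\left(M_i\right)=\lambda_i\left(a_iF_{i+1}-b_iG_{i+1}\right)$, and eliminating it the other way ($d_i$ times the first minus $c_i$ times the second) produces $G_i\det\left(M_i\right)=\lambda_i\left(d_iG_{i+1}-c_iF_{i+1}\right)$; with $\alpha_i:=\det\left(M_i\right)/\lambda_i$ these are exactly the two components of $M_i\left(\begin{smallmatrix}F_{i+1}\\-G_{i+1}\end{smallmatrix}\right)=\alpha_i\left(\begin{smallmatrix}F_i\\-G_i\end{smallmatrix}\right)$. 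This step silently uses $\lambda_i\neq0$, which is automatic once the $M_i$ are invertible, and it already delivers $\alpha_i\lambda_i=\det\left(M_i\right)$. For the converse I would note that the passage $\left(G_i,F_i\right)\mapsto\left(\begin{smallmatrix}F_i\\-G_i\end{smallmatrix}\right)$ is, up to transposition, an involution — the analogous construction applied to the right eigenvector $\left(\begin{smallmatrix}F_i\\-G_i\end{smallmatrix}\right)$ gives back $\left(G_i,F_i\right)$ — so running the same computation in reverse converts the right-eigenvector relation back into the left one.

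For the ``moreover'' clause the observations I would use are that $\det U_i=F_i^{-1}F_i=1$ (so $U_i^{-1}=\left(\begin{smallmatrix}F_i&0\\-G_i&F_i^{-1}\end{smallmatrix}\right)$), that the bottom row of $U_i$ is the left eigenvector $\left(G_i,F_i\right)$, and that the first column of $U_{i+1}^{-1}$ is the right eigenvector $\left(\begin{smallmatrix}F_{i+1}\\-G_{i+1}\end{smallmatrix}\right)$. Consequently the bottom row of $U_iM_iU_{i+1}^{-1}$ equals $\lambda_i$ times the bottom row of $U_{i+1}U_{i+1}^{-1}=I$, namely $\lambda_i\left(0,1\right)$, while the first column of $U_iM_iU_{i+1}^{-1}$ equals $\alpha_i U_i$ applied to the first column of $U_i^{-1}$, namely $\alpha_i\left(\begin{smallmatrix}1\\0\end{smallmatrix}\right)$. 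Hence $U_iM_iU_{i+1}^{-1}$ is upper triangular with diagonal $\left(\alpha_i,\lambda_i\right)$, and its one remaining entry I would read off by multiplying the top row $\left(F_i^{-1},0\right)$ of $U_i$ against $M_iU_{i+1}^{-1}$, obtaining $\frac{b_i}{F_iF_{i+1}}$. The relation $\alpha_i\lambda_i=\det\left(M_i\right)$ then also follows a second time from $\det\left(U_iM_iU_{i+1}^{-1}\right)=\det\left(M_i\right)$ together with $\det U_i=\det U_{i+1}=1$.

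I do not expect a genuine obstacle here; the statement is elementary linear algebra. The only points calling for attention are the sign in the quarter-turn $\left(G,F\right)\mapsto\left(F,-G\right)$ (getting it wrong breaks the involution and mis-identifies $\alpha_i$), the need to carry the identity $\alpha_i=\det\left(M_i\right)/\lambda_i$ — i.e.\ to track determinants — through the computation, and the degenerate indices where $\lambda_i=0$ or $F_i=0$; the former cannot occur when the $M_i$ are invertible, and the latter is explicitly excluded by the hypothesis of the ``moreover'' clause. Organizing the whole argument around ``rows of $U_i$ and columns of $U_{i+1}^{-1}$ are the eigenvectors'' is precisely what keeps it from degenerating into an opaque $2\times2$ slog.
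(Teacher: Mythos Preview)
Your argument is correct and, for the triangularization clause, essentially identical to the paper's: both of you observe that the bottom row of $U_i$ is the left eigenvector and the first column of $U_{i+1}^{-1}$ is the right eigenvector, and then read off the three nontrivial entries of $U_iM_iU_{i+1}^{-1}$ one at a time.

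The only genuine difference is in the eigenvector equivalence. You unpack the left-eigenvector relation into two scalar equations and eliminate $G_i$ to obtain the right-eigenvector relation directly, which has the advantage of producing $\alpha_i=\det(M_i)/\lambda_i$ immediately. The paper instead argues by orthogonality: from $(G_i,F_i)M_i\left(\begin{smallmatrix}F_{i+1}\\-G_{i+1}\end{smallmatrix}\right)=\lambda_i(G_{i+1},F_{i+1})\left(\begin{smallmatrix}F_{i+1}\\-G_{i+1}\end{smallmatrix}\right)=0$ it follows that $M_i\left(\begin{smallmatrix}F_{i+1}\\-G_{i+1}\end{smallmatrix}\right)$ lies in the one-dimensional orthogonal complement of $(G_i,F_i)$, hence is a scalar multiple of $\left(\begin{smallmatrix}F_i\\-G_i\end{smallmatrix}\right)$. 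The paper's route is a touch more conceptual and does not need to divide by $\lambda_i$, while yours is more explicit and front-loads the determinant identity; both are short and valid.
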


\begin{proof}
Suppose that $M_{i}$ has a $\lambda_{i}$-left eigenvector $\left(G_{i},F_{i}\right)$.
Then 
\[
\left(G_{i},F_{i}\right)M_{i}\left(\begin{smallmatrix}F_{i+1}\\
-G_{i+1}
\end{smallmatrix}\right)=\lambda_{i}\left(G_{i+1},F_{i+1}\right)\left(\begin{smallmatrix}F_{i+1}\\
-G_{i+1}
\end{smallmatrix}\right)=0,
\]
implying that $\text{\ensuremath{M_{i}\left(\begin{smallmatrix}F_{i+1}\\
 -G_{i+1} 
\end{smallmatrix}\right)}\ensuremath{\ensuremath{\perp\left(G_{i},F_{i}\right)}}}$. In our 2-dimensional space, the perpendicular space of a nonzero
vector is a 1-dimensional space, so that $M_{i}\left(\begin{smallmatrix}F_{i+1}\\
-G_{i+1}
\end{smallmatrix}\right)=\alpha_{i}\left(\begin{smallmatrix}F_{i}\\
-G_{i}
\end{smallmatrix}\right)$ for some scalar $\alpha_{i}$, namely it is a right eigenvector.

For our choice of $U_{i}$ the row $e_{2}^{tr}U_{i}$ is the left
eigenvector, and since $U_{i+1}^{-1}=\left(\begin{smallmatrix}F_{i+1} & 0\\
-G_{i+1} & F_{i+1}^{-1}
\end{smallmatrix}\right)$ the column $U_{i+1}^{-1}e_{1}$ is the right eigenvector. We now
get that 
\begin{align*}
e_{2}^{tr}U_{i}M_{i}U_{i+1}^{-1} & =\lambda_{i}e_{2}^{tr}U_{i+1}U_{i+1}^{-1}=\lambda_{i}e_{2}^{tr}\\
U_{i}M_{i}U_{i+1}^{-1}e_{1} & =\alpha_{i}U_{i}U_{i}^{-1}e_{1}=\alpha_{i}e_{1}\\
e_{1}^{tr}U_{i}M_{i}U_{i+1}^{-1}e_{2} & =\left(F_{i}^{-1}e_{1}^{tr}\right)M_{i}\left(F_{i+1}^{-1}e_{2}\right)=\frac{b_{i}}{F_{i}F_{i+1}}.
\end{align*}
Hence, all together we get that 
\[
U_{i}M_{i}U_{i+1}^{-1}=\left(\begin{smallmatrix}\alpha_{i} & \frac{b_{i}}{F_{i}F_{i+1}}\\
0 & \lambda_{i}
\end{smallmatrix}\right),
\]
which completes the proof.
\end{proof}
\newpage{}

To fully utilize this triangularization let's recall the formula for
computing a product of triangular matrices.
\begin{claim}
\label{claim:upper-triangular}For sequences $\alpha_{i},\beta_{i},\gamma_{i}$
with $\alpha_{i},\gamma_{i}\neq0$ we have that 
\begin{align*}
\prod_{1}^{n-1}\left(\begin{smallmatrix}\alpha_{i} & \beta_{i}\\
0 & \gamma_{i}
\end{smallmatrix}\right) & =\left(\begin{smallmatrix}\prod_{1}^{n-1}\alpha_{i} & c_{n}\\
0 & \prod_{1}^{n-1}\gamma_{i}
\end{smallmatrix}\right)\\
c_{n} & =\sum_{k=1}^{n-1}\left(\prod_{i=1}^{k-1}\alpha_{i}\right)\beta_{k}\left(\prod_{i=k+1}^{n-1}\gamma_{i}\right)
\end{align*}
In particular, as a Mobius map we get 
\[
\left[\prod_{1}^{n-1}\left(\begin{smallmatrix}\alpha_{i} & \beta_{i}\\
0 & \gamma_{i}
\end{smallmatrix}\right)\right]\left(0\right)=\sum_{k=1}^{n-1}\frac{\beta_{k}}{\gamma_{k}}\left(\prod_{i=1}^{k-1}\frac{\alpha_{i}}{\gamma_{i}}\right)
\]
.
\end{claim}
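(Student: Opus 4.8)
The plan is to prove the product formula for upper triangular matrices by a straightforward induction on $n$, and then deduce the Mobius-map evaluation as an immediate corollary. First I would check the base case $n=1$: the empty product $\prod_1^{0}$ is the identity matrix $\left(\begin{smallmatrix}1 & 0\\ 0 & 1\end{smallmatrix}\right)$, which matches $\left(\begin{smallmatrix}\prod_1^{0}\alpha_i & c_1\\ 0 & \prod_1^{0}\gamma_i\end{smallmatrix}\right)$ since empty products equal $1$ and $c_1 = \sum_{k=1}^{0}(\cdots) = 0$. (One could equally start from $n=2$, where the product is the single matrix $\left(\begin{smallmatrix}\alpha_1 & \beta_1\\ 0 & \gamma_1\end{smallmatrix}\right)$ and $c_2 = \beta_1$, which again checks out.)

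For the inductive step, suppose the formula holds for $n$; I would then multiply the known form of $\prod_1^{n-1}$ on the right by the next factor $\left(\begin{smallmatrix}\alpha_n & \beta_n\\ 0 & \gamma_n\end{smallmatrix}\right)$. The diagonal entries clearly become $\prod_1^{n}\alpha_i$ and $\prod_1^{n}\gamma_i$, since upper triangular matrices multiply diagonal-wise, and the lower-left entry stays $0$. The only thing to verify is the upper-right entry: it equals $\left(\prod_1^{n-1}\alpha_i\right)\beta_n + c_n\gamma_n$. Substituting the inductive expression for $c_n$, this is
\[
\left(\prod_{i=1}^{n-1}\alpha_i\right)\beta_n + \gamma_n\sum_{k=1}^{n-1}\left(\prod_{i=1}^{k-1}\alpha_i\right)\beta_k\left(\prod_{i=k+1}^{n-1}\gamma_i\right)
= \left(\prod_{i=1}^{n-1}\alpha_i\right)\beta_n + \sum_{k=1}^{n-1}\left(\prod_{i=1}^{k-1}\alpha_i\right)\beta_k\left(\prod_{i=k+1}^{n}\gamma_i\right),
\]
where absorbing $\gamma_n$ into each product of $\gamma$'s extends its range to $k+1,\dots,n$. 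Recognizing the first term as the $k=n$ summand (with $\prod_{i=n+1}^{n}\gamma_i$ an empty product equal to $1$), this collapses to $\sum_{k=1}^{n}\left(\prod_{i=1}^{k-1}\alpha_i\right)\beta_k\left(\prod_{i=k+1}^{n}\gamma_i\right)$, which is exactly $c_{n+1}$. This closes the induction.

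Finally, for the Mobius statement I would simply apply the transformation $\left(\begin{smallmatrix}A & C\\ 0 & D\end{smallmatrix}\right)(0) = C/D$ to the computed product matrix, giving $c_n/\prod_1^{n-1}\gamma_i$. Dividing numerator and denominator term by term: $c_n/\prod_1^{n-1}\gamma_i = \sum_{k=1}^{n-1}\left(\prod_{i=1}^{k-1}\alpha_i\right)\beta_k\left(\prod_{i=k+1}^{n-1}\gamma_i\right)/\prod_{i=1}^{n-1}\gamma_i$, and since $\left(\prod_{i=k+1}^{n-1}\gamma_i\right)/\left(\prod_{i=1}^{n-1}\gamma_i\right) = 1/\prod_{i=1}^{k}\gamma_i$, each summand becomes $\beta_k\left(\prod_{i=1}^{k-1}\alpha_i\right)/\prod_{i=1}^{k}\gamma_i = \frac{\beta_k}{\gamma_k}\prod_{i=1}^{k-1}\frac{\alpha_i}{\gamma_i}$, as claimed. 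The hypotheses $\alpha_i,\gamma_i\neq 0$ are exactly what is needed to make these term-by-term divisions legitimate.

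There is essentially no serious obstacle here; the only mild bookkeeping nuisance is keeping the index ranges of the three products straight through the inductive step (especially spotting that the leading $\beta_n$ term is the missing $k=n$ summand), so I would be careful to state the empty-product conventions explicitly at the outset.
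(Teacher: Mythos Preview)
Your proposal is correct and follows exactly the approach the paper intends: the paper's own proof reads ``A simple induction which is left as an exercise,'' and you have carried out precisely that induction with the appropriate bookkeeping. There is nothing to add.
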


\begin{proof}
A simple induction which is left as an exercise.
\end{proof}
\begin{example}
\label{exa:Reproving_euler_conversion}The last two results can be combined together to reprove the conversion
from continued fractions in the Euler family from \exaref{Euler-family}
to infinite sum. Given two functions $h_{1},h_{2}:\ZZ\to\CC$ set
\begin{align*}
b_{i} & =-h_{1}\left(i\right)h_{2}\left(i\right)\\
a_{i} & =h_{1}\left(i\right)+h_{2}\left(i+1\right),
\end{align*}
and let $M_{i}=\left(\begin{smallmatrix}0 & b_{i}\\
1 & a_{i}
\end{smallmatrix}\right)$. We already saw in \exaref{euler-left-right-eigenvectors} that this
sequence has $h_{2}\left(i\right)$-left eigenvector $\left(1,h_{2}\left(i\right)\right)$,
so \lemref{triangularization} implies that for $U_{i}=\left(\begin{smallmatrix}h_{2}\left(i\right)^{-1} & 0\\
1 & h_{2}\left(i\right)
\end{smallmatrix}\right)$ we have
\[
U_{i}M_{i}U_{i+1}^{-1}=\left(\begin{smallmatrix}h_{1}\left(i\right) & -\frac{h_{1}\left(i\right)}{h_{2}\left(i+1\right)}\\
0 & h_{2}\left(i\right)
\end{smallmatrix}\right).
\]
Then \claimref{upper-triangular} shows that 
\begin{align*}
\left(U_{1}\left[\prod_{1}^{n-1}M_{i}\right]U_{n+1}^{-1}\right)\left(0\right) & =-\frac{1}{h_{2}\left(1\right)}\cdot\sum_{k=1}^{n-1}\left(\prod_{i=1}^{k}\frac{h_{1}\left(i\right)}{h_{2}\left(i+1\right)}\right).
\end{align*}
Set $\alpha={\displaystyle \sum_{k=0}^{n-1}}\left({\displaystyle \prod_{i=1}^{k}}\frac{h_{1}\left(i\right)}{h_{2}\left(i+1\right)}\right)$,
so the expression above is $\frac{1-\alpha}{h_{2}\left(1\right)}$.
Since $U_{n+1}^{-1}\left(0\right)=0$, we conclude that 
\begin{align*}
\KK_{1}^{n-1}\frac{b_{i}}{a_{i}} & =\left[\prod_{1}^{n-1}M_{i}\right]\left(0\right)=U_{1}^{-1}\left(\frac{1-\alpha}{h_{2}\left(1\right)}\right)=\left(\begin{smallmatrix}h_{2}\left(1\right) & 0\\
-1 & h_{2}\left(1\right)^{-1}
\end{smallmatrix}\right)\left(\frac{1-\alpha}{h_{2}\left(1\right)}\right)\\
 & =\frac{1-\alpha}{\frac{1}{h_{2}\left(1\right)}\left(\alpha-1\right)+\frac{1}{h_{2}\left(1\right)}}=h_{2}\left(1\right)\left(\alpha^{-1}-1\right),
\end{align*}
which is exactly what we got in \exaref{Euler-family}.
\end{example}

\newpage{}

\part{Appendix}

\appendix

\section{\label{app:Identifying-polynomial-continued}Identifying polynomial
continued fractions in the Euler family}

Recall from \thmref{recurrence-roots} that given polynomials

\begin{align*}
b\left(x\right) & =-h_{1}\left(x\right)h_{2}\left(x\right)\\
f\left(x\right)a\left(x\right) & =f\left(x-1\right)h_{1}\left(x\right)+f\left(x+1\right)h_{2}\left(x+1\right)
\end{align*}
we can compute the convergent of the continued fractions as 
\[
\KK_{1}^{n}\frac{b_{i}}{a_{i}}=\frac{f\left(1\right)h_{2}\left(1\right)}{f\left(0\right)}\left(\frac{1}{\sum_{k=0}^{n}\frac{f\left(0\right)f\left(1\right)}{f\left(k\right)f\left(k+1\right)}\prod_{i=1}^{k}\left(\frac{h_{1}\left(i\right)}{h_{2}\left(i+1\right)}\right)}-1\right).
\]

Given only $a\left(x\right)$ and $b\left(x\right)$, and all possible
decompositions of $b\left(x\right)$, we can easily find if there
is a solution to the forms above with $f\left(x\right)\equiv1$. When
$f$ can have general degree, this task is more complicated, and in
this section we describe an algorithm which solves it. A python version
solving this problem can be found in  \cite{ramanujan_machine_research_group_ramanujan_2023}.

With this in mind we have the following results, which can eventually
be used to construct an algorithm which finds $f\left(x\right)$ (if
such a polynomial exists).
\begin{lem}
\label{lem:polynomial-recursion}Suppose that there is a solution
for an equation of the form
\begin{equation}
f\left(x+1\right)\beta_{\left(1\right)}\left(x\right)+f\left(x\right)\beta_{\left(0\right)}\left(x\right)+f\left(x-1\right)\beta_{\left(-1\right)}\left(x\right)=0,\label{eq:pol-recursion-formula}
\end{equation}
where $f,\beta_{\left(i\right)}\in\CC\left[x\right]$ are nonzero
polynomials. Let $d_{f}=\deg\left(f\right)$, $d=\max\left\{ \deg\left(\beta_{\left(i\right)}\right)\;\mid\;i=-1,0,1\right\} $
and write 
\begin{align*}
\beta_{\left(i\right)}\left(x\right) & =\sum_{j=0}^{d}\beta_{\left(i\right)}^{\left(j\right)}x^{j}
\end{align*}
where the coefficients $\beta_{\left(i\right)}^{\left(j\right)}\in\CC$
are scalars (and we use the convention of $\beta_{\left(i\right)}^{\left(j\right)}=0$
for negative $j$). Then
\begin{enumerate}
\item The sum $\beta_{\left(-1\right)}^{\left(d\right)}+\beta_{\left(0\right)}^{\left(d\right)}+\beta_{\left(1\right)}^{\left(d\right)}=0$.
In particular, at least two of the $\beta_{\left(i\right)}$ have
the max degree $d$.
\item If $\beta_{\left(-1\right)}^{\left(d\right)}-\beta_{\left(1\right)}^{\left(d\right)}\neq0$,
then the degree of $f$ must be $d_{f}=\frac{\beta_{\left(-1\right)}^{\left(d-1\right)}+\beta_{\left(0\right)}^{\left(d-1\right)}+\beta_{\left(1\right)}^{\left(d-1\right)}}{\beta_{\left(-1\right)}^{\left(d\right)}-\beta_{\left(1\right)}^{\left(d\right)}}$.
In particular, this expression must be well defined and an integer.
\item If $\beta_{\left(-1\right)}^{\left(d\right)}-\beta_{\left(1\right)}^{\left(d\right)}=0$,
then $\beta_{\left(-1\right)}^{\left(d\right)}+\beta_{\left(1\right)}^{\left(d\right)}\neq0$
and
\[
\left(\beta_{\left(-1\right)}^{\left(d-2\right)}+\beta_{\left(0\right)}^{\left(d-2\right)}+\beta_{\left(1\right)}^{\left(d-2\right)}\right)+d_{f}\left(-\beta_{\left(-1\right)}^{\left(d-1\right)}+\beta_{\left(1\right)}^{\left(d-1\right)}\right)+\binom{d_{f}}{2}\left(\beta_{\left(-1\right)}^{\left(d\right)}+\beta_{\left(1\right)}^{\left(d\right)}\right)=0
\]
is a nontrivial quadratic equation in $d_{f}$.
\end{enumerate}
\end{lem}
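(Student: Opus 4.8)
The plan is to substitute the binomial expansions of $f\left(x+1\right)$ and $f\left(x-1\right)$ into \eqref{pol-recursion-formula} and compare the coefficients of the three highest powers of $x$ appearing there, namely $x^{d_{f}+d}$, $x^{d_{f}+d-1}$ and $x^{d_{f}+d-2}$. Write $f\left(x\right)=\sum_{m}f_{m}x^{m}$ with $f_{d_{f}}\neq0$. From $\left(x\pm1\right)^{m}=x^{m}\pm mx^{m-1}+\binom{m}{2}x^{m-2}\mp\cdots$ one reads off that the coefficients of $x^{d_{f}}$, $x^{d_{f}-1}$, $x^{d_{f}-2}$ in $f\left(x\pm1\right)$ are $f_{d_{f}}$, then $f_{d_{f}-1}\pm d_{f}f_{d_{f}}$, then $f_{d_{f}-2}\pm\left(d_{f}-1\right)f_{d_{f}-1}+\binom{d_{f}}{2}f_{d_{f}}$ respectively. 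Multiplying by $\beta_{\left(\pm1\right)}\left(x\right)=\beta_{\left(\pm1\right)}^{\left(d\right)}x^{d}+\beta_{\left(\pm1\right)}^{\left(d-1\right)}x^{d-1}+\cdots$ (and handling the unshifted term $f\left(x\right)\beta_{\left(0\right)}\left(x\right)$ the same way), I extract each of the three coefficients of the identically-zero polynomial on the left of \eqref{pol-recursion-formula} and organize the outcome by the powers $f_{d_{f}}$, $f_{d_{f}-1}$, $f_{d_{f}-2}$. Throughout it is convenient to abbreviate $S_{j}=\beta_{\left(-1\right)}^{\left(j\right)}+\beta_{\left(0\right)}^{\left(j\right)}+\beta_{\left(1\right)}^{\left(j\right)}$, $D_{j}=\beta_{\left(-1\right)}^{\left(j\right)}-\beta_{\left(1\right)}^{\left(j\right)}$ and $E_{j}=\beta_{\left(-1\right)}^{\left(j\right)}+\beta_{\left(1\right)}^{\left(j\right)}$.

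For part (1), the coefficient of $x^{d_{f}+d}$ in \eqref{pol-recursion-formula} is exactly $f_{d_{f}}S_{d}$, so $f_{d_{f}}\neq0$ forces $S_{d}=0$. If only one of the $\beta_{\left(i\right)}$ had degree $d$, then $S_{d}$ would equal its own nonzero leading coefficient, a contradiction, so at least two of the $\beta_{\left(i\right)}$ attain degree $d$. For part (2), I extract the coefficient of $x^{d_{f}+d-1}$: the terms carrying $f_{d_{f}-1}$ assemble into $f_{d_{f}-1}S_{d}=0$, and what remains is $f_{d_{f}}\left(S_{d-1}-d_{f}D_{d}\right)=0$. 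Dividing by $f_{d_{f}}$ gives the key relation $S_{d-1}=d_{f}D_{d}$, which yields the stated value of $d_{f}$ whenever $D_{d}\neq0$; since $d_{f}$ is a nonnegative integer, this in particular forces the quotient to be a well-defined integer.

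For part (3), assume $D_{d}=0$. If in addition $E_{d}=0$ then $\beta_{\left(-1\right)}^{\left(d\right)}=\beta_{\left(1\right)}^{\left(d\right)}=0$, and $S_{d}=0$ then forces $\beta_{\left(0\right)}^{\left(d\right)}=0$ as well, contradicting the maximality of $d$; hence $E_{d}\neq0$. Now I extract the coefficient of $x^{d_{f}+d-2}$. The $f_{d_{f}-2}$ terms collapse to $f_{d_{f}-2}S_{d}=0$; the $f_{d_{f}-1}$ terms collapse to $f_{d_{f}-1}\left(S_{d-1}-\left(d_{f}-1\right)D_{d}\right)$, which vanishes since $D_{d}=0$ and $S_{d-1}=d_{f}D_{d}=0$ by the relation from part (2); what is left is $f_{d_{f}}\left(S_{d-2}-d_{f}D_{d-1}+\binom{d_{f}}{2}E_{d}\right)=0$. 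Dividing by $f_{d_{f}}$ and rewriting $-D_{d-1}=\beta_{\left(1\right)}^{\left(d-1\right)}-\beta_{\left(-1\right)}^{\left(d-1\right)}$ gives precisely the displayed equation, and because the coefficient of $d_{f}^{2}$ in $\binom{d_{f}}{2}E_{d}=\tfrac12\left(d_{f}^{2}-d_{f}\right)E_{d}$ equals $\tfrac12 E_{d}\neq0$, it is a genuine (nontrivial) quadratic in $d_{f}$.

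Every step is elementary; the only real work is the careful bookkeeping of the $x^{d_{f}+d-2}$ coefficient of each of the three products and the repeated use of the already-established identities $S_{d}=0$ and $S_{d-1}=d_{f}D_{d}$ to kill the $f_{d_{f}-1}$ and $f_{d_{f}-2}$ contributions. That accounting is where an arithmetic slip is most likely, so I would lay it out as a short table of the contributions grouped by $f_{d_{f}}$, $f_{d_{f}-1}$, $f_{d_{f}-2}$ before summing.
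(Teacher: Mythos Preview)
Your argument is correct and follows essentially the same approach as the paper: expand $f(x\pm1)$ via the binomial theorem, read off the coefficients of $x^{d_f+d}$, $x^{d_f+d-1}$, $x^{d_f+d-2}$ in \eqref{pol-recursion-formula}, and use the successively obtained identities $S_d=0$ and $S_{d-1}=d_fD_d$ to eliminate the lower $f$-coefficients at each stage. Your shorthand $S_j,D_j,E_j$ makes the bookkeeping slightly cleaner than the paper's explicit sums $\sum_{k=-1}^{1}k^{m}\beta_{(k)}^{(j)}$, but the computations and logical structure are identical.
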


\newpage{}
\begin{proof}
In general, the coefficients of a product of polynomials is a convolution
of the coefficients of the given polynomials. In order to use this,
we first want to find the coefficients of $f_{\left(k\right)}=f\left(x+k\right)$
for $k=-1,0,1$, so that 
\[
\sum_{k=-1}^{1}f_{\left(k\right)}\left(x\right)\beta_{\left(k\right)}\left(x\right)=0.
\]
Writing $f\left(x\right)=\sum_{0}^{d_{f}}f^{\left(i\right)}\cdot x^{i}$
where $f^{\left(i\right)}\in\CC$, we get that
\[
f_{\left(k\right)}\left(x\right)=\sum_{0}^{d_{f}}f^{\left(i\right)}\cdot\left(x+k\right)^{i}=\sum_{i=0}^{d_{f}}f^{\left(i\right)}\cdot\sum_{j=0}^{i}\binom{i}{j}x^{j}k^{i-j}=\sum_{j=0}^{d_{f}}x^{j}\sum_{i=j}^{d_{f}}f^{\left(i\right)}\cdot\binom{i}{j}k^{i-j}.
\]

For $i<j$, we can write $\binom{i}{j}=0$, so that the coefficient
of $x^{j}$ in $f_{\left(k\right)}$ is 
\[
f_{\left(k\right)}^{\left(j\right)}=\sum_{i=0}^{d_{f}}f^{\left(i\right)}\cdot\binom{i}{j}k^{i-j}.
\]
The coefficient of $x^{d_{f}+d-\ell}$ in $\sum_{k=-1}^{1}f_{\left(k\right)}\left(x\right)\cdot\beta_{\left(k\right)}\left(x\right)$
is
\begin{align*}
\sum_{k=-1}^{1}\left[\sum_{j=0}^{\ell}f_{\left(k\right)}^{\left(d_{f}-j\right)}\beta_{\left(k\right)}^{\left(d+j-\ell\right)}\right] & =\sum_{k=-1}^{1}\sum_{j=0}^{\ell}\sum_{i=0}^{d_{f}}f^{\left(i\right)}\cdot\binom{i}{d_{f}-j}k^{i+j-d_{f}}\beta_{\left(k\right)}^{\left(d+j-\ell\right)}\\
 & =\sum_{i=0}^{d_{f}}f^{\left(i\right)}\left[\sum_{j=d_{f}-i}^{\ell}\binom{i}{d_{f}-j}\sum_{k=-1}^{1}k^{i+j-d_{f}}\beta_{\left(k\right)}^{\left(d+j-\ell\right)}\right].
\end{align*}

\begin{enumerate}
\item We first look at the leading coefficient, namely the coefficient of
$x^{d_{f}+d}$, which should be zero. This means that $\ell=0$, implying
that from all the sums we are left with $j=0$ and $i=d_{f}$, so
that 
\[
0=f^{\left(d_{f}\right)}\left[\sum_{k=-1}^{1}\beta_{\left(k\right)}^{\left(d\right)}\right].
\]
The leading coefficient of $f$ is non zero, so we are left with
\[
0=\beta_{\left(-1\right)}^{\left(d\right)}+\beta_{\left(0\right)}^{\left(d\right)}+\beta_{\left(1\right)}^{\left(d\right)}.
\]
Since this sum is zero, and at least one of the summands is nonzero
(since $d=\max\left\{ \deg\left(\beta_{\left(k\right)}\right)\;\mid\;k=-1,0,1\right\} $),
at least two of them are non zero.\\
\newpage{}
\item Next, taking $\ell=1$ and equating the coefficient to 0, we get that
\begin{align*}
0 & =\sum_{i=0}^{d_{f}}f^{\left(i\right)}\left[\sum_{j=d_{f}-i}^{1}\binom{i}{d_{f}-j}\sum_{k=-1}^{1}k^{i+j-d_{f}}\beta_{\left(k\right)}^{\left(d+j-\ell\right)}\right]\\
 & =\overbrace{f^{\left(d_{f}\right)}\left[\sum_{j=0}^{1}\binom{d_{f}}{d_{f}-j}\sum_{k=-1}^{1}k^{j}\beta_{\left(k\right)}^{\left(d+j-1\right)}\right]}^{i=d_{f}}+\overbrace{f^{\left(d_{f}-1\right)}\left[\sum_{k=-1}^{1}\beta_{\left(k\right)}^{\left(d\right)}\right]}^{i=d_{f}-1}.
\end{align*}
From part (1) we know that $\left[\sum_{k=-1}^{1}\beta_{\left(k\right)}^{\left(d\right)}\right]=0$.
Using again the fact that $f^{\left(d_{f}\right)}\neq0$, we get that
\[
0=\sum_{k=-1}^{1}\beta_{\left(k\right)}^{\left(d-1\right)}+d_{f}\sum_{k=-1}^{1}k\beta_{\left(k\right)}^{\left(d\right)}.
\]
In particular, if $\beta_{\left(1\right)}^{\left(d\right)}-\beta_{\left(-1\right)}^{\left(d\right)}=\sum_{k=-1}^{1}k\beta_{\left(k\right)}^{\left(d\right)}\neq0$,
then 
\[
d_{f}=-\frac{\sum_{k=-1}^{1}\beta_{\left(k\right)}^{\left(d-1\right)}}{\sum_{k=-1}^{1}k\beta_{\left(k\right)}^{\left(d\right)}}.
\]
Otherwise, we get that $\sum_{k=-1}^{1}k\beta_{\left(k\right)}^{\left(d\right)}=\sum_{k=-1}^{1}\beta_{\left(k\right)}^{\left(d-1\right)}=0$.
\item Finally, letting $\ell=2$, we get
\begin{align*}
0 & =\sum_{i=0}^{d_{f}}f^{\left(i\right)}\left[\sum_{j=d_{f}-i}^{2}\binom{i}{d_{f}-j}\sum_{k=-1}^{1}k^{i+j-d_{f}}\beta_{\left(k\right)}^{\left(d+j-2\right)}\right]\\
 & =f^{\left(d_{f}\right)}\left[\sum_{j=0}^{2}\binom{d_{f}}{d_{f}-j}\sum_{k=-1}^{1}k^{j}\beta_{\left(k\right)}^{\left(d+j-2\right)}\right]+f^{\left(d_{f}-1\right)}\left[\sum_{j=1}^{2}\binom{d_{f}-1}{d_{f}-j}\sum_{k=-1}^{1}k^{j-1}\beta_{\left(k\right)}^{\left(d+j-2\right)}\right]+f^{\left(d_{f}-2\right)}\left[\sum_{k=-1}^{1}\beta_{\left(k\right)}^{\left(d\right)}\right].
\end{align*}
Once again, we know that $\sum_{k=-1}^{1}\beta_{\left(k\right)}^{\left(d\right)}=0$,
which removes the last summand.\\
If $\sum_{k=-1}^{1}k\beta_{\left(k\right)}^{\left(d\right)}=\sum_{k=-1}^{1}\beta_{\left(k\right)}^{\left(d-1\right)}=0$,
then the second summand is zero, and dividing by the nonzero coefficient
$f^{\left(d_{f}\right)}$, we are left with
\[
0=\sum_{k=-1}^{1}\beta_{\left(k\right)}^{\left(d-2\right)}+d_{f}\sum_{k=-1}^{1}k\beta_{\left(k\right)}^{\left(d-1\right)}+\binom{d_{f}}{2}\sum_{k=-1}^{1}k^{2}\beta_{\left(k\right)}^{\left(d\right)}.
\]
We already have that $\sum_{k=-1}^{1}\beta_{\left(k\right)}^{\left(d\right)}=0$
and assumed that $\sum_{k=-1}^{1}k\beta_{\left(k\right)}^{\left(d\right)}$.
If $\sum_{k=-1}^{1}k^{2}\beta_{\left(k\right)}^{\left(d\right)}$
as well, then we must have that $\beta_{\left(k\right)}^{\left(d\right)}=0$
for $k=-1,0,1$, but $d$ was chosen as the max degree of the $\beta_{\left(k\right)}$,
so at least one of the $\beta_{\left(k\right)}^{\left(d\right)}$
cannot be zero. Thus under our assumption we get that $\sum_{k=-1}^{1}k^{2}\beta_{\left(k\right)}^{\left(d\right)}=\beta_{\left(-1\right)}^{\left(d\right)}+\beta_{\left(1\right)}^{\left(d\right)}\neq0$,
so that the quadratic equation above is not trivial.
\end{enumerate}
\end{proof}
Note that once we know the $\beta_{\left(k\right)}$ and the degree
of $f$, \eqref{pol-recursion-formula} in the lemma is a linear system
in the coefficients of $f$, which can easily be solved using standard
methods.

Applying the previous lemma to our case, we get the following:
\begin{thm}
\label{thm:f_algorithm}Suppose that $f,a,b,h_{1},h_{2}\in\CC\left[x\right]$
are polynomials satisfying 
\begin{align}
b\left(x\right) & =-h_{1}\left(x\right)h_{2}\left(x\right)\label{eq:PCF-recursion}\\
f\left(x\right)a\left(x\right) & =f\left(x-1\right)h_{1}\left(x\right)+f\left(x+1\right)h_{2}\left(x+1\right).
\end{align}
Let $d=\max\left\{ \deg\left(a\right),\deg\left(h_{1}\right),\deg\left(h_{2}\right)\right\} $,
$d_{f}=\deg\left(f\right)$ and write 
\begin{align*}
a\left(x\right) & =\sum_{i=0}^{d}a^{\left(i\right)}x^{i} & h_{1}\left(x\right) & =\sum_{i=0}^{d}h_{1}^{\left(i\right)}x^{i} & h_{2}\left(x\right) & =\sum_{i=0}^{d}h_{2}^{\left(i\right)}x^{i}.
\end{align*}
Finally, let $c_{i}$ be the leading coefficients of $h_{i}$ for
$i=1,2$ and $A,B$ be the leading coefficients of $a\left(x\right),b\left(x\right)$
respectively. 

Then the possible triples $\left(c_{1},c_{2},d_{f}\right)$
\begin{enumerate}
\item If $\deg\left(a\right)=\deg\left(h_{1}\right)>\deg\left(h_{2}\right)$,
then $c_{1}=A$, $c_{2}=-\frac{B}{A}$ and $d_{f}=\frac{a^{\left(d-1\right)}-h_{1}^{\left(d-1\right)}-h_{2}^{\left(d-1\right)}}{-A}$.
\item If $\deg\left(a\right)=\deg\left(h_{2}\right)>\deg\left(h_{1}\right)$,
then $c_{1}=-\frac{B}{A}$, $c_{2}=A$ and $d_{f}=\frac{a^{\left(d-1\right)}-h_{1}^{\left(d-1\right)}-h_{2}^{\left(d-1\right)}-dA}{A}$.
\item If $\deg\left(h_{1}\right)=\deg\left(h_{2}\right)>\deg\left(a\right)$,
then $\left\{ c_{1},c_{2}\right\} =\left\{ \sqrt{B},-\sqrt{B}\right\} $
and $d_{f}=\frac{a^{\left(d-1\right)}-h_{1}^{\left(d-1\right)}-h_{2}^{\left(d-1\right)}-dh_{2}^{\left(d\right)}}{h_{2}^{\left(d\right)}-h_{1}^{\left(d\right)}}$.
\item If $\deg\left(h_{1}\right)=\deg\left(h_{2}\right)=\deg\left(a\right)$,
then $\left\{ c_{1},c_{2}\right\} $ are the roots of $x^{2}-Ax-B=0$.
\begin{enumerate}
\item If $c=c_{1}=c_{2}$ (so that $B=-c^{2}$, $A=2c$), then $d_{f}$
is a solution to 
\[
{\scriptstyle \left(a^{\left(d-2\right)}-h_{1}^{\left(d-2\right)}-\left(h_{2}^{\left(d-2\right)}+\left(d-1\right)h_{2}^{\left(d-1\right)}+\binom{d}{2}h_{2}^{\left(d\right)}\right)\right)+d_{f}\left(h_{1}^{\left(d-1\right)}-\left(h_{2}^{\left(d-1\right)}+dh_{2}^{\left(d\right)}\right)\right)-\binom{d_{f}}{2}A=0}.
\]
\item Otherwise, $d_{f}=\frac{a^{\left(d-1\right)}-h_{1}^{\left(d-1\right)}-h_{2}^{\left(d-1\right)}-dh_{2}^{\left(d\right)}}{h_{2}^{\left(d\right)}-h_{1}^{\left(d\right)}}$.
\end{enumerate}
\end{enumerate}
\end{thm}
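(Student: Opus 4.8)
The plan is to reduce everything to \lemref{polynomial-recursion}. Rewrite the second relation of the theorem as
\[
f(x+1)\,h_2(x+1)\;+\;f(x)\,\bigl(-a(x)\bigr)\;+\;f(x-1)\,h_1(x)\;=\;0,
\]
which is precisely \eqref{pol-recursion-formula} with $\beta_{(1)}(x)=h_2(x+1)$, $\beta_{(0)}(x)=-a(x)$, $\beta_{(-1)}(x)=h_1(x)$. Shifting the variable does not change degrees, so $\max\{\deg\beta_{(-1)},\deg\beta_{(0)},\deg\beta_{(1)}\}=\max\{\deg a,\deg h_1,\deg h_2\}$ is the same $d$ as in the theorem, and the result will follow by translating the three conclusions of the lemma back into the language of $a,h_1,h_2$.

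First I would record the coefficients the lemma refers to. From $\beta_{(-1)}=h_1$ and $\beta_{(0)}=-a$ one has $\beta_{(-1)}^{(j)}=h_1^{(j)}$ and $\beta_{(0)}^{(j)}=-a^{(j)}$ for every $j$; the only computation is in $\beta_{(1)}(x)=h_2(x+1)=\sum_j h_2^{(j)}(x+1)^j$, whose coefficient of $x^{d-m}$ equals $\sum_j\binom{j}{d-m}h_2^{(j)}$. This gives $\beta_{(1)}^{(d)}=h_2^{(d)}$, $\beta_{(1)}^{(d-1)}=h_2^{(d-1)}+d\,h_2^{(d)}$, and $\beta_{(1)}^{(d-2)}=h_2^{(d-2)}+(d-1)h_2^{(d-1)}+\binom{d}{2}h_2^{(d)}$, with the convention that $h_i^{(j)}=0$ and $a^{(j)}=0$ beyond the respective degrees.

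Next comes the case split. Part (1) of \lemref{polynomial-recursion} says $\beta_{(-1)}^{(d)}+\beta_{(0)}^{(d)}+\beta_{(1)}^{(d)}=0$ with at least two summands nonzero; since these are the degree-$d$ coefficients of $h_1$, $-a$, $h_2$ respectively, this means at least two of $\deg a,\deg h_1,\deg h_2$ equal $d$ — in particular $\deg a$ cannot be the unique maximum, which is why there are exactly the four listed configurations. In each configuration I combine the relation $\beta_{(-1)}^{(d)}+\beta_{(0)}^{(d)}+\beta_{(1)}^{(d)}=0$ with the identity $c_1c_2=-B$ (which holds because $b=-h_1h_2$ and $\deg b=\deg h_1+\deg h_2$): $\deg a=\deg h_1>\deg h_2$ gives $c_1=A$ and then $c_2=-B/A$; $\deg a=\deg h_2>\deg h_1$ gives $c_2=A$ and $c_1=-B/A$; $\deg h_1=\deg h_2>\deg a$ gives $c_1=-c_2$ and hence $\{c_1,c_2\}=\{\sqrt B,-\sqrt B\}$; and $\deg a=\deg h_1=\deg h_2$ gives $c_1+c_2=A$, $c_1c_2=-B$, i.e. $\{c_1,c_2\}$ are the roots of $x^2-Ax-B$.

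Finally, for $d_f$ I check in each configuration whether $\beta_{(-1)}^{(d)}-\beta_{(1)}^{(d)}$ (equal to $c_1$, $-c_2$, $c_1-c_2$, $c_1-c_2$ in the four cases) vanishes. It is nonzero in cases $1,2,3$ and in case $4$ when $c_1\ne c_2$; part (2) of the lemma then gives $d_f=(\beta_{(-1)}^{(d-1)}+\beta_{(0)}^{(d-1)}+\beta_{(1)}^{(d-1)})/(\beta_{(-1)}^{(d)}-\beta_{(1)}^{(d)})$, and substituting the coefficients from the first step produces the stated expressions (the sign that turns $c_1-c_2$ into $h_2^{(d)}-h_1^{(d)}$ in the denominator is cancelled by the same sign in the numerator). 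In the remaining subcase $c_1=c_2=c$ of case $4$ we have $\beta_{(-1)}^{(d)}-\beta_{(1)}^{(d)}=0$, so part (3) applies: $\beta_{(-1)}^{(d)}+\beta_{(1)}^{(d)}=2c=A\ne0$ (nonzero because $\deg h_1=d$ forces $c\ne0$), and the quadratic in $d_f$ displayed there becomes, after substituting $\beta_{(1)}^{(d-1)},\beta_{(1)}^{(d-2)}$ and negating, exactly the equation in the theorem. The only thing requiring care is the bookkeeping of the binomial contributions of $h_2(x+1)$ and of the signs; everything else is a direct transcription of \lemref{polynomial-recursion}.
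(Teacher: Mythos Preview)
Your proof is correct and follows essentially the same approach as the paper: apply \lemref{polynomial-recursion} with $\beta_{(1)}=h_2(x+1)$, $\beta_{(0)}=-a$, $\beta_{(-1)}=h_1$, then translate the three conclusions of the lemma into the four degree configurations using $a^{(d)}=h_1^{(d)}+h_2^{(d)}$ and $c_1c_2=-B$. Your write-up is in fact more explicit than the paper's in tracking the binomial contributions from the shift $h_2(x)\mapsto h_2(x+1)$ and in checking the sign cancellations, but the underlying argument is identical.
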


\begin{proof}
Applying \lemref{polynomial-recursion} to this case we get that 
\begin{enumerate}
\item We have $a^{\left(d\right)}=h_{1}^{\left(d\right)}+h_{2}^{\left(d\right)}$
and at least two of the $h_{1}^{\left(d\right)},h_{2}^{\left(d\right)},a^{\left(d\right)}$
are non zero.
\item If $h_{1}^{\left(d\right)}\neq h_{2}^{\left(d\right)}$, then the
degree of $f$ must be $d_{f}=\frac{a^{\left(d-1\right)}-h_{1}^{\left(d-1\right)}-h_{2}^{\left(d-1\right)}-dh_{2}^{\left(d\right)}}{h_{2}^{\left(d\right)}-h_{1}^{\left(d\right)}}$.
In particular, this expression must be well defined and an integer.
\item If $h_{1}^{\left(d\right)}=h_{2}^{\left(d\right)}$, then $a^{\left(d-1\right)}=h_{1}^{\left(d-1\right)}+h_{2}^{\left(d-1\right)}+dh_{2}^{\left(d\right)}$
and 
\[
{\scriptstyle \left(a^{\left(d-2\right)}-h_{1}^{\left(d-2\right)}-\left(h_{2}^{\left(d-2\right)}+\left(d-1\right)h_{2}^{\left(d-1\right)}+\binom{d}{2}h_{2}^{\left(d\right)}\right)\right)+d_{f}\left(h_{1}^{\left(d-1\right)}-\left(h_{2}^{\left(d-1\right)}+dh_{2}^{\left(d\right)}\right)\right)-\binom{d_{f}}{2}\left(h_{1}^{\left(d\right)}+h_{2}^{\left(d\right)}\right)=0}
\]
is a nontrivial quadratic equation in $d_{f}$ (namely $\left(h_{1}^{\left(d\right)}+h_{2}^{\left(d\right)}\right)\neq0$).
\end{enumerate}
Part (1) shows that at least 2 of the 3 polynomials $a,h_{1},h_{2}$
have the max degree $d$. Separating according to the degree of the
3rd polynomial, and using $a^{\left(d\right)}=h_{1}^{\left(d\right)}+h_{2}^{\left(d\right)}$
and the fact that $c_{1}c_{2}=-B$, we obtain the cases presented
in the theorem.
\end{proof}
\begin{example}
\begin{enumerate}
\item Suppose that we start with a polynomial continued fraction in the
trivial Euler family, namely 
\begin{align*}
b\left(x\right) & =-h_{1}\left(x\right)h_{2}\left(x\right)\\
a\left(x\right) & =h_{1}\left(x\right)+h_{2}\left(x+1\right).
\end{align*}
Guessing the decomposition of $b\left(x\right)$, the algorithm above
should show that $d_{f}=0$, namely we can take $f\equiv1$ constant.
Let's see three examples:
\begin{enumerate}
\item If $b\left(x\right)=-x^{d}\times x^{d}$ and $a\left(x\right)=x^{d}+\left(1+x\right)^{d}$,
then $\deg\left(h_{1}\right)=\deg\left(h_{2}\right)=\deg\left(a\right)=d$.
This leaves us in part (4) of \thmref{f_algorithm}, and with the
polynomial $x^{2}-2x+1$ which has a double root $x=1$, so we are
in case (a). We now have that
\begin{align*}
\begin{array}{c|ccc}
j= & \;d\; & d-1 & d-2\\
\hline a^{\left(j\right)} & 2 & d & \binom{d}{2}\\
h_{1}^{\left(j\right)} & 1 & 0 & 0\\
h_{2}^{\left(j\right)} & 1 & 0 & 0
\end{array},
\end{align*}
so to find $d_{f}$, we need to solve the equation
\begin{align*}
0 & =\left(\binom{d}{2}-\binom{d}{2}\right)-d\cdot d_{f}-2\binom{d_{f}}{2}=-\left(d+d_{f}-1\right)d_{f}.
\end{align*}
Hence either $d_{f}=1-d\leq0$ or $d_{f}=0$, so in any way we know
to look for a constant $f$ solution.
\item If $b\left(x\right)=-\left(-x^{d}\right)\times x^{d}$ and $a\left(x\right)=\left(1+x\right)^{d}-x^{d}$,
so that $\deg\left(h_{1}\right)=\deg\left(h_{2}\right)=d>d-1=\deg\left(a\right)$.
Then we are in case (3) from \thmref{f_algorithm}. We have
\begin{align*}
\begin{array}{c|ccc}
j= & \;d\; & d-1 & d-2\\
\hline a^{\left(j\right)} & 0 & d & \binom{d}{2}\\
h_{1}^{\left(j\right)} & -1 & 0 & 0\\
h_{2}^{\left(j\right)} & 1 & 0 & 0
\end{array}.
\end{align*}
and the degree of $f$ should be
\[
d_{f}=\frac{a^{\left(d-1\right)}-h_{1}^{\left(d-1\right)}-h_{2}^{\left(d-1\right)}-dh_{2}^{\left(d\right)}}{h_{2}^{\left(d\right)}-h_{1}^{\left(d\right)}}=\frac{d-d}{1-\left(-1\right)}=0.
\]
\item Consider $b\left(x\right)=-1\times x$ (so that $\deg\left(h_{1}\right)\neq\deg\left(h_{2}\right)$)
and $a\left(x\right)=1+\left(x+1\right)$, and therefore $d=1$, which
fall in case (2) above. Here the coefficients of $x^{-1}$ are considered
as zero and we get
\begin{align*}
\begin{array}{c|ccc}
j= & \;d\; & d-1 & d-2\\
\hline a^{\left(j\right)} & 1 & 2 & 0\\
h_{1}^{\left(j\right)} & 0 & 1 & 0\\
h_{2}^{\left(j\right)} & 1 & 0 & 0
\end{array}.
\end{align*}
and the degree is
\[
d_{f}=\frac{a^{\left(d-1\right)}-h_{1}^{\left(d-1\right)}-h_{2}^{\left(d-1\right)}-dh_{2}^{\left(d\right)}}{h_{2}^{\left(d\right)}-h_{1}^{\left(d\right)}}=\frac{2-1-0-1}{1-0}=0.
\]
\end{enumerate}
\item Consider $b\left(x\right)=-x^{3}\times x^{3}$ and $a\left(x\right)=x^{3}+\left(1+x\right)^{3}+4\cdot\left(2x+1\right)$,
where $\tilde{h}_{1}\left(x\right)=\tilde{h}_{2}\left(x\right)=x^{3}$,
so that $3=\deg\left(\tilde{h}_{1}\right)=\deg\left(\tilde{h}_{2}\right)=\deg\left(a\right)$.
This puts us in case $4$, and since $A=2,\;B=-1$, we get that $x^{2}-Ax-B=\left(x-1\right)^{2}$
has double root $x=1$, so we are in $4\left(a\right)$, so in any
possible solution we must have $h_{i}=\tilde{h}_{i}$. We now have
that 
\begin{align*}
\begin{array}{c|ccc}
j= & \;d\; & d-1 & d-2\\
\hline a^{\left(j\right)} & 2 & 3 & 11=\binom{3}{2}+8\\
h_{1}^{\left(j\right)} & 1 & 0 & 0\\
h_{2}^{\left(j\right)} & 1 & 0 & 0
\end{array},
\end{align*}
and we are looking for the degree $d_{f}$ as a solution for 
\[
0=8-3d_{f}-d_{f}\left(d_{f}-1\right)=8-2d_{f}-d_{f}^{2}=\left(4+d_{f}\right)\left(2-d_{f}\right).
\]
Since $d_{f}$ needs to be nonnegative, we only need to check $d_{f}=2$.
Solving the linear system will produce $f(x)=x^{2}+x+\frac{1}{2}$.
\item Take $b\left(x\right)=-x^{6}$ and $a\left(x\right)=34x^{3}+51x^{2}+27x+5$
which is the polynomial continued fraction used by Ap\'ery in his
proof of irrationality of $\zeta\left(3\right)$ \cite{apery_irrationalite_1979,van_der_poorten_proof_1979}.
Let us show that in this case there is no solution to \eqref{PCF-recursion}
in the \thmref{f_algorithm}.\\
Assume by negation that there is a solution. In any decomposition
$b\left(x\right)=-h_{1}\left(x\right)h_{2}\left(x\right)$ we have
that $\deg\left(h_{1}\right)+\deg\left(h_{2}\right)=6$. Since $\deg\left(a\right)=3$,
in order for part $\left(1\right)$ in \thmref{f_algorithm} to hold,
we must have that $\deg\left(h_{1}\right)=\deg\left(h_{2}\right)=3$,
and therefore $\tilde{h_{1}}\left(x\right)=\tilde{h}_{2}\left(x\right)=x^{3}$.
Using the \thmref{f_algorithm}, the leading coefficients for $h_{1},h_{2}$
are the roots for $x^{2}-34x+1$. In particular, we can write them
as $c,\frac{1}{c}$ where $c+\frac{1}{c}=34$, and in particular $c\neq\pm1$.\\
It then follows that the degree of $f$ is 
\[
d_{f}=\frac{a^{\left(d-1\right)}-h_{1}^{\left(d-1\right)}-h_{2}^{\left(d-1\right)}-dh_{2}^{\left(d\right)}}{h_{2}^{\left(d\right)}-h_{1}^{\left(d\right)}}=\frac{51-0-0-3}{\frac{1}{c}-c}=\frac{48\cdot c}{\left(1-c^{2}\right)},
\]
where $d_{f}\geq0$ is an integer. It follows that $c$ also satisfies
the quadratic equation $d_{f}c^{2}+48c-d_{f}=0$. Combining the two
quadratic equations we get 
\[
0=\left(d_{f}c^{2}+48c-d_{f}\right)-d_{f}\left(c^{2}-34c+1\right)=\left(48+34\cdot d_{f}\right)c-2d_{f},
\]
so $c$ must be a rational number. However, if $c^{2}-34c+1$ has
a rational root, then its denominator and numerator must divide $1$,
namely the root must be $\pm1$ - contradiction. \\
We conclude that the polynomial continued fraction presentation $\KK_{1}^{\infty}\frac{-n^{6}}{34n^{3}+51n^{2}+27n+5}$
cannot be written as in \eqref{PCF-recursion}.
\end{enumerate}
\end{example}

\section{\label{app:Beta-and-Gamma}Degree 1 Euler continued fraction}

We have already seen several examples of polynomial continued fraction
in \ref{exa:Euler-family} for some well known constants. In this
section we restrict our attention to Euler continued fraction of the
form $\KK_{1}^{\infty}\frac{-h_{1}\left(i\right)h_{2}\left(i\right)}{h_{1}\left(i\right)+h_{2}\left(i\right)}$
where both $h_{1},h_{2}$ are polynomial of degree $1$ over $\ZZ$.

Writing $h_{1}\left(x\right)=ax+b$ and $h_{2}\left(x\right)=cx+d$
with $a,b,c,d\in\ZZ$, we have seen that 
\[
\left(\frac{1}{h_{2}\left(1\right)}\KK_{1}^{n}\frac{b\left(i\right)}{a\left(i\right)}+1\right)^{-1}=\sum_{m=0}^{n}\prod_{i=1}^{m}\left(\frac{h_{1}\left(i\right)}{h_{2}\left(i+1\right)}\right)=\sum_{m=0}^{n}\prod_{i=1}^{m}\left(\frac{ai+b}{ci+c+d}\right).
\]
In order for the right hand side to converge to a finite number,
we may assume that $\left|a\right|\leq\left|c\right|$.

In case where $a,c=1$, so that the roots of $h_{1},h_{2}$ are integers,
then we can write the product above as 
\[
\prod_{i=1}^{m}\left(\frac{i+b}{i+1+d}\right)=\frac{\left(m+b\right)!}{b!}\cdot\frac{\left(1+d\right)!}{\left(m+1+d\right)!}.
\]
This is a rational function $m$, where we can use standard decomposition
of rational functions in order to sum up over it. Note also that if
$b\leq-1$(or $d\leq-2$), then the product is zero (or has division
by zero respectively). A similar behavior happens when $a,c\in\left\{ \pm1\right\} $.

Things become more complicated when $a,c\notin\left\{ \pm1\right\} $.
\begin{example}
Taking $h_{1}\left(x\right)=x$, $h_{2}\left(x\right)=2x-1$ we get
\[
\left(\KK_{1}^{n}\frac{-n\left(2n-1\right)}{3n+1}+1\right)^{-1}=\sum_{k=0}^{n}\prod_{i=1}^{k}\left(\frac{i}{2i+1}\right)=\sum_{k=0}^{n}\frac{k!}{3\cdot5\cdot7\cdots\left(2k-1\right)\cdot\left(2k+1\right)}=\sum_{k=0}^{n}2^{k}\frac{k!k!}{\left(2k+1\right)!}.
\]
We can no longer use rational function decomposition, and standard
summation tricks to find the limit. Luckily for us, the factorials
appearing in the summation can be written as:
\[
\frac{k!k!}{\left(2k+1\right)!}=\int_{0}^{1}x^{k}\left(1-x\right)^{k}\dx.
\]

This is a standard result about the Beta function $B\left(z_{1},z_{2}\right)$
which we will describe below. Once we have this equality, standard
analysis methods show that 
\[
\sum_{k=0}^{n}2^{k}\frac{k!k!}{\left(2k+1\right)!}=\sum_{0}^{n}\int_{0}^{1}2^{k}x^{k}\left(1-x\right)^{k}\dx=\int_{0}^{1}\frac{1-\left(2x\left(1-x\right)\right)^{n+1}}{1-2x\left(1-x\right)}\dx.
\]
Note that $\left|2x\left(1-x\right)\right|\leq\frac{1}{2}$ so when
$n\to\infty$, the numerator converge to 1 uniformly. It follows that
the limit is 
\[
\int_{0}^{1}\frac{1}{1-2x\left(1-x\right)}\dx=2\int_{0}^{1}\frac{1}{\left(2x-1\right)^{2}+1}\dx=\arctan\left(2x-1\right)\mid_{0}^{1}=\frac{\pi}{2}.
\]
\end{example}

As seen in the example above, the idea was to move from the discrete
factorial function to the continuous Beta function, then use tools
from analysis to find the limit. In order to generalize this process,
we begin by recalling the definition and the main results about the
Beta and Gamma functions we need. 
\begin{defn}[\textbf{Beta and Gamma}]
The Beta and Gamma functions of complex variables are defined by
\begin{align*}
B\left(z_{1},z_{2}\right) & =\int_{0}^{1}t^{z_{1}-1}\left(1-t\right)^{z_{2}-1}\dt\\
\Gamma\left(z\right) & =\int_{0}^{\infty}e^{-t}t^{z-1}\dt.
\end{align*}
\end{defn}

\begin{lem}
\label{lem:Beta-Gamma}The Beta and Gamma functions satisfy the following:
\begin{enumerate}
\item \textbf{Gamma Function:}
\begin{enumerate}
\item $\Gamma\left(1\right)=1$ and $\Gamma\left(z+1\right)=z\cdot\Gamma\left(z\right)$
for any $z\in\CC$ with $Re\left(z\right)>1$ . In particular $\Gamma\left(n+1\right)=n!$
for an integer $n\geq0$.
\item For $0<a\leq b$ integers, we have
\[
\prod_{i=0}^{m-1}\left(ai+b\right)=b\left(a+b\right)\left(2a+b\right)\cdots\left(\left(m-1\right)a+b\right)=a^{m}\frac{\Gamma\left(m+\frac{b}{a}\right)}{\Gamma\left(\frac{b}{a}\right)}.
\]
\end{enumerate}
\item \textbf{Gamma To Beta: }For any $z_{1},z_{2}\in\CC$ we have 
\[
B\left(z_{1},z_{2}\right)=\frac{\Gamma\left(z_{1}\right)\Gamma\left(z_{2}\right)}{\Gamma\left(z_{1}+z_{2}\right)}.
\]
\item \textbf{Beta Function:}
\begin{enumerate}
\item For $x>0$ we have
\[
B\left(1,x\right)=\frac{1}{x}.
\]
\item For $x,y>0$ the Beta function satisfies the recurrences:
\[
B\left(x+1,y\right)=B\left(x,y\right)\cdot\frac{x}{x+y}\quad;\quad B\left(x,y+1\right)=B\left(x,y\right)\cdot\frac{y}{x+y}.
\]
\end{enumerate}
\end{enumerate}
\end{lem}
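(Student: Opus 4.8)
The plan is to prove the Gamma-function facts directly from the integral definition, and then obtain every statement about the Beta function as a formal consequence of the Gamma--Beta identity of part (2), so that the only genuine computation is that identity itself.

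For part (1)(a) I would compute $\Gamma(1)=\int_{0}^{\infty}e^{-t}\,\dt=1$, and integrate $\Gamma(z+1)=\int_{0}^{\infty}e^{-t}t^{z}\,\dt$ by parts (differentiating $t^{z}$, integrating $e^{-t}$) to get $\Gamma(z+1)=\bigl[-e^{-t}t^{z}\bigr]_{0}^{\infty}+z\int_{0}^{\infty}e^{-t}t^{z-1}\,\dt=z\,\Gamma(z)$, the boundary term vanishing because $\mathrm{Re}(z)>1$; then $\Gamma(n+1)=n!$ follows by induction from $\Gamma(1)=1$. Part (1)(b) is then bookkeeping: factor out $a$ to write $\prod_{i=0}^{m-1}(ai+b)=a^{m}\prod_{i=0}^{m-1}\!\bigl(i+\tfrac{b}{a}\bigr)$, and apply the functional equation $m$ times to $\Gamma\bigl(m+\tfrac{b}{a}\bigr)=\bigl(m-1+\tfrac{b}{a}\bigr)\cdots\bigl(\tfrac{b}{a}\bigr)\Gamma\bigl(\tfrac{b}{a}\bigr)$, so that $\prod_{i=0}^{m-1}\!\bigl(i+\tfrac{b}{a}\bigr)=\Gamma\bigl(m+\tfrac{b}{a}\bigr)/\Gamma\bigl(\tfrac{b}{a}\bigr)$.

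For part (2) I would start from the absolutely convergent double integral (valid when $\mathrm{Re}(z_{1}),\mathrm{Re}(z_{2})>0$)
\[
\Gamma(z_{1})\Gamma(z_{2})=\int_{0}^{\infty}\!\!\int_{0}^{\infty}e^{-(s+t)}s^{z_{1}-1}t^{z_{2}-1}\,\ds\,\dt ,
\]
and perform the change of variables $s=uv$, $t=u(1-v)$ on $(u,v)\in(0,\infty)\times(0,1)$, whose Jacobian is $u$. This factors the integral as $\bigl(\int_{0}^{\infty}e^{-u}u^{z_{1}+z_{2}-1}\,du\bigr)\bigl(\int_{0}^{1}v^{z_{1}-1}(1-v)^{z_{2}-1}\,dv\bigr)=\Gamma(z_{1}+z_{2})\,B(z_{1},z_{2})$, which rearranges to the claim; the case of general $z_{1},z_{2}$ then follows by analytic continuation, or one simply records the identity for positive real parts, which is all that the rest of the appendix uses.

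Part (3) is then immediate from (1) and (2): $B(1,x)=\Gamma(1)\Gamma(x)/\Gamma(1+x)=\Gamma(x)/\bigl(x\Gamma(x)\bigr)=1/x$, and $B(x+1,y)=\Gamma(x+1)\Gamma(y)/\Gamma(x+y+1)=x\Gamma(x)\Gamma(y)/\bigl((x+y)\Gamma(x+y)\bigr)=\tfrac{x}{x+y}B(x,y)$, with the recurrence in the second variable obtained the same way (or from the symmetry $B(x,y)=B(y,x)$, visible from the integral via $t\mapsto 1-t$). The only real obstacle is the rigorous justification of Fubini and of the substitution in part (2) — absolute convergence of the double integral and smoothness/bijectivity of $(u,v)\mapsto(uv,u(1-v))$ on the open quadrant — which is exactly why the identity is first established on $\mathrm{Re}(z_{i})>0$; since all of this is classical, a citation to a standard analysis text could replace the details entirely.
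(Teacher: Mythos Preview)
Your proof is correct and complete. The paper itself does not prove this lemma at all: its entire proof reads ``These are all well known and elementary results, and are left as exercise to the reader.'' So there is no approach to compare against; you have simply supplied the standard arguments (integration by parts for the functional equation, the polar-type substitution $s=uv,\ t=u(1-v)$ for the Gamma--Beta identity, and algebraic consequences for the Beta recurrences), which is exactly what one would expect a reader filling in the exercise to do.
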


\begin{proof}
These are all well known and elementary results, and are left as exercise
to the reader.
\end{proof}
\newpage{}

With these results, we now have the following:
\begin{claim}
Let $h_{1}\left(x\right)=ax+b$ and $h_{2}\left(x\right)=cx+d$. Suppose
that (1) $0<a\leq c$ , (2) $0\leq b,d$ and (3) $1+\frac{d}{c}>\frac{b}{a}$.
Then:
\begin{align*}
\left(\frac{1}{h_{2}\left(1\right)}\KK_{1}^{n}\frac{-h_{1}\left(i\right)h_{2}\left(i\right)}{h_{1}\left(i\right)+h_{2}\left(i+1\right)}+1\right)^{-1} & =\frac{1}{B\left(1+\frac{b}{a},1+\frac{d}{c}-\frac{b}{a}\right)}\int_{0}^{1}\frac{t^{\frac{b}{a}}\left(1-t\right)^{\frac{d}{c}-\frac{b}{a}}}{1-\frac{a}{c}t}\dt.
\end{align*}
In particular, for $a=c>0$ and $d>b$, the expression above equals
to
\[
\frac{B\left(1+\frac{b}{a},\frac{d-b}{a}\right)}{B\left(1+\frac{b}{a},1+\frac{d-b}{a}\right)}=\frac{d+a}{d-b}.
\]
\end{claim}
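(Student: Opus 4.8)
The plan is to reduce everything to evaluating the sum produced by Euler's conversion and then to trade the discrete product for a Beta integral. From \exaref{Euler-family} we already know (reading $\KK_1^n$ in the limit $n\to\infty$, which is plainly what is meant since the right side is $n$-free; the finite truncations are the evident partial sums)
\[
\left(\frac{1}{h_{2}\left(1\right)}\KK_{1}^{\infty}\frac{-h_{1}\left(i\right)h_{2}\left(i\right)}{h_{1}\left(i\right)+h_{2}\left(i+1\right)}+1\right)^{-1}=\sum_{k=0}^{\infty}P_{k},\qquad P_{k}=\prod_{i=1}^{k}\frac{ai+b}{ci+c+d},
\]
so it suffices to evaluate $\sum_{k}P_{k}$ in closed form, with convergence falling out of the hypotheses.

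First I would put $P_{k}$ into closed form. Reindexing gives numerator $\prod_{j=0}^{k-1}\left(aj+(a+b)\right)$ and denominator $\prod_{j=0}^{k-1}\left(cj+(2c+d)\right)$, both of the shape handled by \lemref{Beta-Gamma}(1b) — here the hypotheses $0\le b$, $0\le d$ ensure $0<a\le a+b$ and $0<c\le 2c+d$ — whence
\[
P_{k}=\left(\frac{a}{c}\right)^{k}\cdot\frac{\Gamma\!\left(k+1+\tfrac{b}{a}\right)}{\Gamma\!\left(1+\tfrac{b}{a}\right)}\cdot\frac{\Gamma\!\left(2+\tfrac{d}{c}\right)}{\Gamma\!\left(k+2+\tfrac{d}{c}\right)}.
\]
Now apply Gamma-to-Beta, \lemref{Beta-Gamma}(2): since $\left(k+1+\tfrac ba\right)+\left(1+\tfrac dc-\tfrac ba\right)=k+2+\tfrac dc$, and since $1+\tfrac dc-\tfrac ba>0$ by hypothesis (3) while $k+1+\tfrac ba>0$ by hypothesis (2), one has $\tfrac{\Gamma(k+1+b/a)}{\Gamma(k+2+d/c)}=\tfrac{B(k+1+b/a,\,1+d/c-b/a)}{\Gamma(1+d/c-b/a)}$; the same identity at $k=0$ gives $\tfrac{\Gamma(2+d/c)}{\Gamma(1+b/a)\Gamma(1+d/c-b/a)}=\tfrac{1}{B(1+b/a,\,1+d/c-b/a)}$. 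Hence
\[
P_{k}=\frac{1}{B\!\left(1+\tfrac ba,\,1+\tfrac dc-\tfrac ba\right)}\left(\frac{a}{c}\right)^{k}B\!\left(k+1+\tfrac ba,\,1+\tfrac dc-\tfrac ba\right).
\]

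Next I would substitute the integral representation $B(k+1+\tfrac ba,1+\tfrac dc-\tfrac ba)=\int_{0}^{1}t^{k+b/a}(1-t)^{d/c-b/a}\,dt$, sum over $k$, interchange sum and integral, and sum the geometric series $\sum_{k\ge0}(\tfrac ac t)^{k}=\tfrac1{1-(a/c)t}$; this gives exactly the asserted formula for $\sum_k P_k$. The one genuine point is the interchange: for $a<c$ the factor $\tfrac1{1-(a/c)t}$ is bounded on $[0,1]$ by $\tfrac1{1-a/c}$ and $\int_{0}^{1}t^{b/a}(1-t)^{d/c-b/a}\,dt=B(1+\tfrac ba,1+\tfrac dc-\tfrac ba)<\infty$, so dominated convergence (or Tonelli, all terms being nonnegative) applies; for $a=c$ one needs the extra hypothesis $d>b$ of the special case to make the integrand integrable near $t=1$ (equivalently, $P_{k}\sim Ck^{-1-(d-b)/a}$ is summable). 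I expect this convergence bookkeeping, together with keeping the index shifts inside the $\Gamma$'s straight, to be the only delicate points.

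Finally, for $a=c>0$ and $d>b$, substitute $a=c$: then $1+\tfrac dc-\tfrac ba=1+\tfrac{d-b}{a}$ and $\tfrac1{1-(a/c)t}=\tfrac1{1-t}$, so the integral is $\int_{0}^{1}t^{b/a}(1-t)^{(d-b)/a-1}\,dt=B(1+\tfrac ba,\tfrac{d-b}{a})$ (finite since $d>b$), and the expression equals $\dfrac{B(1+\frac ba,\frac{d-b}{a})}{B(1+\frac ba,1+\frac{d-b}{a})}$. One application of the Beta recurrence $B(x,y+1)=\tfrac{y}{x+y}B(x,y)$ from \lemref{Beta-Gamma}(3b), with $x=1+\tfrac ba$ and $y=\tfrac{d-b}{a}$, gives $B(1+\tfrac ba,1+\tfrac{d-b}{a})=\tfrac{(d-b)/a}{1+d/a}B(1+\tfrac ba,\tfrac{d-b}{a})=\tfrac{d-b}{a+d}B(1+\tfrac ba,\tfrac{d-b}{a})$, so the ratio is $\dfrac{a+d}{d-b}$, completing the proof.
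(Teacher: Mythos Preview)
Your proof is correct and follows essentially the same route as the paper's: express the Euler-family sum via the Gamma product formula, convert to Beta via $B(z_1,z_2)=\Gamma(z_1)\Gamma(z_2)/\Gamma(z_1+z_2)$, replace each Beta term by its integral representation, interchange sum and integral to collapse the geometric series, and finish the $a=c$ case with the Beta recurrence. Your justification of the interchange (Tonelli/dominated convergence for $a<c$, and the observation that one needs $d>b$ when $a=c$) is in fact a shade more careful than the paper's appeal to uniform convergence on $[0,1-\varepsilon]$.
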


\begin{proof}
Using the results on the Beta and Gamma functions from \lemref{Beta-Gamma}
we get that:
\begin{align*}
\left(\frac{1}{h_{2}\left(1\right)}\KK_{1}^{n}\frac{-h_{1}\left(i\right)h_{2}\left(i\right)}{h_{1}\left(i\right)+h_{2}\left(i+1\right)}+1\right)^{-1} & =\sum_{m=0}^{n}\prod_{i=1}^{m}\left(\frac{h_{1}\left(i\right)}{h_{2}\left(i+1\right)}\right)=\sum_{m=0}^{n}\prod_{i=1}^{m}\left(\frac{ai+b}{ci+c+d}\right)\\
 & =\sum_{k=0}^{n}a^{m}\frac{\Gamma\left(1+m+\frac{b}{a}\right)}{\Gamma\left(1+\frac{b}{a}\right)}\frac{1}{c^{m}}\frac{\Gamma\left(2+\frac{d}{c}\right)}{\Gamma\left(2+m+\frac{d}{c}\right)}\\
 & =\frac{\Gamma\left(2+\frac{d}{c}\right)}{\Gamma\left(1+\frac{b}{a}\right)\Gamma\left(1+\frac{d}{c}-\frac{b}{a}\right)}\sum_{m=0}^{n}\left(\frac{a}{c}\right)^{m}\frac{\Gamma\left(1+m+\frac{b}{a}\right)\Gamma\left(1+\frac{d}{c}-\frac{b}{a}\right)}{\Gamma\left(2+m+\frac{d}{c}\right)}\\
 & =\frac{1}{B\left(1+\frac{b}{a},1+\frac{d}{c}-\frac{b}{a}\right)}\overbrace{\sum_{m=0}^{n}\left(\frac{a}{c}\right)^{m}B\left(1+m+\frac{b}{a},1+\frac{d}{c}-\frac{b}{a}\right)}^{=S\left(a,b,c,d\right)}.
\end{align*}

Using the definition of $B\left(z_{1},z_{2}\right)$ we obtain
\[
S\left(a,b,c,d\right)=\sum_{m=0}^{\infty}\int_{0}^{1}\left(\frac{a}{c}\cdot t\right)^{m}\cdot t^{\frac{b}{a}}\left(1-t\right)^{\frac{d}{c}-\frac{b}{a}}\dt.
\]
Under the assumption that $\left|a\right|\leq\left|c\right|$, we
get that $\sum_{0}^{\infty}\left(\frac{a}{c}\right)^{m}t^{m}$ converges
uniformly to $\frac{1}{1-\frac{a}{c}t}$ in $\left[0,1-\varepsilon\right]$
for any $0<\varepsilon<1$. Thus, we can switch the order of summation
and integration to obtain:
\[
S\left(a,b,c,d\right)=\sum_{m=0}^{\infty}\int_{0}^{1}\left(\frac{a}{c}\right)^{m}t^{m}\cdot t^{\frac{b}{a}}\left(1-t\right)^{\frac{d}{c}-\frac{b}{a}}\dt=\int_{0}^{1}\frac{t^{\frac{b}{a}}\left(1-t\right)^{\frac{d}{c}-\frac{b}{a}}}{1-\frac{a}{c}t}\dt.
\]
This completes the first claim.

In case where $a=c>0$ and $d\geq b$ we can write 
\[
S\left(a,b,a,d\right)=\int_{0}^{1}\frac{t^{\frac{b}{a}}\left(1-t\right)^{\frac{d}{a}-\frac{b}{a}}}{1-t}\dt=B\left(1+\frac{b}{a},\frac{d-b}{a}\right).
\]
Using the recurrence $B\left(x,y+1\right)=B\left(x,y\right)\cdot\frac{y}{x+y}$,
we conclude that 
\[
\left(\frac{1}{h_{2}\left(1\right)}\KK_{1}^{n}\frac{-h_{1}\left(i\right)h_{2}\left(i\right)}{h_{1}\left(i\right)+h_{2}\left(i+1\right)}+1\right)^{-1}=\frac{B\left(1+\frac{b}{a},\frac{d-b}{a}\right)}{B\left(1+\frac{b}{a},1+\frac{d-b}{a}\right)}=\frac{B\left(1+\frac{b}{a},\frac{d-b}{a}\right)}{B\left(1+\frac{b}{a},\frac{d-b}{a}\right)\frac{\frac{d-b}{a}}{1+\frac{b}{a}+\frac{d-b}{a}}}=\frac{d+a}{d-b}.
\]
\end{proof}

\newpage{}

\bibliographystyle{plain}
\bibliography{apery}

\end{document}